\documentclass[leqno]{amsart}
\pdfoutput=1
\usepackage[whole]{bxcjkjatype}
\usepackage[a4paper, margin=3cm]{geometry}
\usepackage{amsthm}
\usepackage{amsmath,amssymb,mathtools,mathdots}
\usepackage{mathrsfs}
\usepackage{url}
\usepackage[all]{xy}
\usepackage{tikz}
\usepackage{tikz-cd}
\usetikzlibrary {arrows,backgrounds}
\usetikzlibrary{shapes.geometric, intersections, calc}
\usetikzlibrary{decorations.pathmorphing, arrows}
\usepackage{comment}
\usepackage{placeins}

\usepackage{multirow}
\usepackage{diagbox}
\usepackage{makecell}
\usepackage{caption}

\newtheorem{theorem}{Theorem}[section]
\newtheorem*{theorem*}{Theorem}
\newtheorem{lemma}[theorem]{Lemma}
\newtheorem*{lemma*}{Lemma}
\newtheorem{proposition}[theorem]{Proposition}
\newtheorem*{proposition*}{Proposition}

\newtheorem*{corollary*}{Corollary}

\newtheorem*{claim*}{Claim}

\newtheorem*{fact*}{Fact}

\newtheorem*{conjecture*}{Conjecture}
\theoremstyle{definition}
\newtheorem{definition}[theorem]{Definition}
\newtheorem*{definition*}{Definition}
\newtheorem{example}[theorem]{Example}
\newtheorem*{example*}{Example}
\newtheorem{remark}[theorem]{Remark}
\newtheorem*{remark*}{Remark}
\newtheorem{question}[theorem]{Question}
\newtheorem*{question*}{Question}

\newtheorem*{assumption*}{Assumption}
\numberwithin{equation}{section}
\allowdisplaybreaks[1]

\DeclareMathOperator{\rank}{rank}

\DeclareMathOperator{\ad}{ad}

\DeclareMathOperator{\Hom}{Hom}

\DeclareMathOperator{\id}{id}

\DeclareMathOperator{\Span}{Span}
\DeclareMathOperator{\BCH}{BCH}

\newcommand{\C}{{\mathbb C}}
\newcommand{\R}{{\mathbb R}}

\newcommand{\Z}{{\mathbb Z}}

\newcommand{\pa}{{\partial}}
\newcommand{\bpa}{{\overline{\partial}}}
\newcommand{\iu}{{\mathrm{i}}}

\newcommand{\conj}[1]{{\overline{#1}}}

\newcommand{\cz}{{\overline{z}}}
\newcommand{\cZ}{{\overline{Z}}}
\newcommand{\cw}{{\overline{w}}}

\newcommand{\fg}{{\mathfrak g}}
\newcommand{\fh}{{\mathfrak h}}

\newcommand{\fk}{{\mathfrak k}}

\usepackage{multirow}

\address{Graduate School of Mathematical Sciences, 
The University of Tokyo,
3-8-1 Komaba, Meguro-ku, 
Tokyo 153-8914, Japan}
\email{shuho@ms.u-tokyo.ac.jp}
\subjclass[2020]{17B30 (Primary); 32M05, 20H15 (Secondary).}
\begin{document}
\title
[
Holomorphic polynomial crystallographic actions of nilpotent groups
]
{
Holomorphic polynomial crystallographic actions of nilpotent groups
}
\author{Shuho Kanda}
\date{}
\maketitle
\begin{abstract}
    We prove that every simply connected nilpotent Lie group endowed with a
left-invariant nilpotent complex structure is biholomorphic to $\C^n$.
Moreover, we construct such a biholomorphism explicitly by polynomial maps
in exponential coordinates.  As a consequence, every lattice in such a Lie
group admits a free, properly discontinuous and cocompact action on $\C^n$ by
holomorphic polynomial automorphisms.  We interpret this consequence as a
holomorphic analogue of polynomial crystallographic actions 
introduced by Dekimpe, Igodt, and Lee.
\end{abstract}

\setcounter{tocdepth}{1}
\tableofcontents

\section{Introduction}

\subsection{A motivating example and the questions}

The main objects studied in this paper are simply connected nilpotent Lie groups $G$
endowed with left-invariant complex structures $J$.
The simplest non-abelian example appears in the Kodaira--Thurston manifold.

Let $H_3$ denote the Heisenberg group (see Section \ref{section:Examples}).
The Kodaira--Thurston manifold $X$ is obtained by equipping
$G=H_3(\R)\times \R$ with a left-invariant complex structure $J$ 
and taking the quotient by the lattice
$\Gamma=H_3(\Z)\times \Z$:
\[
    X=(\Gamma \backslash G,J).
\]
This is a classical example of a compact complex surface which admits no K\"ahler metric.
On the other hand, the same manifold can also be constructed as the quotient of $\C^2$
by the following affine, and in particular polynomial, action of $\Gamma$:
\[
    \left\{
    \begin{aligned}
        \gamma_1 \cdot (z,w)&=(z+\iu,w+z),\\
        \gamma_2 \cdot (z,w)&=(z+1,w),\\
        \gamma_3 \cdot (z,w)&=(z,w+\iu),\\
        \gamma_4 \cdot (z,w)&=(z,w+1).
    \end{aligned}
    \right.
\]
Here $\{ \gamma_i \}_{i=1}^4$ are generators of $\Gamma$.
The compatibility of these two descriptions shows that the universal cover of $X$
can be identified biholomorphically with $\C^2$, and that the deck transformations
are given by holomorphic polynomial automorphisms under this identification.

This raises the question of whether the same phenomenon holds for more general pairs $(G,J)$.
More precisely, we are led to the following questions: 

\begin{question}[\cite{Has10}, Conjecture (i)]
\label{Que:question1}
    Let $G$ be a simply connected nilpotent Lie group of real dimension $2n$,
    and let $J$ be a left-invariant complex structure on $G$.
    Is $(G,J)$ biholomorphic to $\C^n$?
\end{question}

\begin{question}\label{Que:question2}
    Let $G$ be a simply connected nilpotent Lie group of real dimension $2n$,
    and let $J$ be a left-invariant complex structure on $G$.
    Suppose that $(G,J)$ is biholomorphic to $\C^n$.
    Does there exist a biholomorphism
    \[
        \Phi \colon (G,J) \xrightarrow{\ \sim\ } \C^n
    \]
    such that, for every $g\in G$, the automorphism
    \[
        \Phi\circ L_g\circ \Phi^{-1} 
        \, \colon \, \C^n \xrightarrow{\ \sim\ } \C^n, 
    \]
    where $L_g$ denotes the left translation by $g$, 
    is given by holomorphic polynomial functions?
\end{question}

A weaker, 
and perhaps more natural, 
lattice version of this question is the following: 

\begin{question}[$\Leftrightarrow$ Question 
\ref{que:main_question}]\label{Que:question2_lattice}
    Let $G$ be a simply connected nilpotent Lie group of real dimension $2n$, 
    and let $\Gamma \subset G$ be a lattice. 
    Does $\Gamma$ admit an action on $\C^n$ by holomorphic polynomial automorphisms
    which is free, properly discontinuous and cocompact?
\end{question}

If $(G,J)$ satisfies the property stated in Question \ref{Que:question2}, 
then the desired action in 
Question \ref{Que:question2_lattice} is obtained by setting 
the action of $\Gamma$ on $\C^n$ by 
$\gamma \cdot z \coloneqq \Phi\circ L_{\gamma}\circ \Phi^{-1}(z)$. 
Conversely, one may also wonder whether an action of $\Gamma$ as in 
Question \ref{Que:question2_lattice} should come from a picture like the one in 
Question \ref{Que:question2}.

We also note that, 
in the special case where $G$ is a complex nilpotent Lie group 
and $J$ is induced by its complex structure, 
the property stated in Question \ref{Que:question2} holds in an elementary way. 
Indeed, 
the exponential map identifies $G$ biholomorphically with $\C^n$, 
and left translations are polynomial in the corresponding coordinates 
by the Baker--Campbell--Hausdorff formula. 

\subsection{Background for the two questions}
Question \ref{Que:question1} was explicitly 
formulated as a conjecture by Hasegawa \cite{Has10}. 
To the best of the author's knowledge, 
there has been little progress on this question. 
For solvable Lie groups, 
the analogous statement is false: as in the case of Inoue surfaces, 
the universal cover may be biholomorphic to 
$\C \times \mathbb{H}$ rather than to $\C^2$. 
In this broader solvable setting, 
however, it remains open whether the universal cover is always Stein. 
Even in the nilpotent case, the Steinness of $(G,J)$ is still unknown in general. 
The results of this paper answer Question \ref{Que:question1} 
affirmatively for a certain class of left-invariant complex structures $J$.

Question \ref{Que:question2} has a deep background 
going back to Milnor's 1977 paper \cite{Mil77}. 
The relevant class of groups in this context is the class of 
\emph{polycyclic-by-finite} groups, 
which contains the lattices in nilpotent Lie groups considered here. 
This background is explained in detail, for example, in 
\cite{Dek96}, \cite{DI97b}, and \cite{Dek00}: 
we briefly recall it here.

In 1977, Milnor asked whether every 
torsion-free polycyclic-by-finite group $\Gamma$ 
occurs as the fundamental group of a compact complete affinely flat manifold. 
Equivalently, this asks whether $\Gamma$ admits a free, 
properly discontinuous, and cocompact affine action on $\R^m$, 
where $m$ is the Hirsch length of $\Gamma$. 
Such an action of $\Gamma$ is called an \emph{affine crystallographic action} 
\cite{FG83}, \cite{GS94}. 
This question remained open for a long time, 
until Y. Benoist found counterexamples among nilpotent groups 
\cite{Ben92}, \cite{Ben95}. 
Later, 
Burde and Grunewald found simpler counterexamples 
\cite{BG95}, \cite{Bur96}. 

Inspired by this problem, 
Dekimpe, Igodt, and Lee introduced 
the notion of a 
\emph{polynomial crystallographic action} in \cite{DIL96}. 
This is a polynomial analogue of an affine crystallographic action, 
obtained by replacing affine actions with polynomial actions: 
see Definition \ref{Def:real polynomial crystallographic action}. 
When $\Gamma$ is nilpotent, 
it is a lattice in the nilpotent Lie group $G_{\Gamma}$, 
called the real Malcev completion of $\Gamma$, 
which is diffeomorphic to $\R^m$ through exponential coordinates. 
Thus the action of $\Gamma$ on $G_{\Gamma}$ by left translations gives, 
via the Baker--Campbell--Hausdorff formula, 
a polynomial action on $\R^m$. 
Dekimpe and Igodt proved in \cite{DI97a}, \cite{DI97b} 
that this existence result extends 
to polycyclic-by-finite groups. 
Subsequent work has studied, among other things, 
bounds on the degrees of such polynomial actions.

In this paper, 
we restrict our attention to nilpotent groups and consider 
a holomorphic analogue of these notions. 
Here, 
$P_h(\C^n)$ denotes the group of holomorphic 
polynomial automorphisms of $\C^n$.

\begin{definition}
[= Definition \ref{Def:holomorphic polynomial crystallographic action}]
Let $\Gamma$ be a torsion-free finitely generated nilpotent 
group with $h(\Gamma)=2n$. 
A \emph{holomorphic polynomial
crystallographic action} of $\Gamma$ is a homomorphism 
\[
  \rho:\Gamma \to P_h(\C^n)
\]
such that the induced action of $\Gamma$ on $\C^n$ is free, properly
discontinuous, and cocompact. 
\end{definition}

Question \ref{Que:question2_lattice} asks 
whether every torsion-free finitely generated nilpotent group 
$\Gamma$ with even Hirsch length admits 
a holomorphic polynomial crystallographic action. 
At present, no counterexample to this question seems to be known. 
The results of this paper may be viewed as giving an affirmative answer 
for a certain class of such groups $\Gamma$.

\subsection{Results}

Let $G$ be a simply connected nilpotent Lie group of dimension $2n$, and let
$\fg$ be its Lie algebra. 
A left-invariant complex structure $J$ on $G$ is 
equivalent to an integrable complex structure, also denoted by $J$, on $\fg$. 
We recall the notion of a nilpotent complex structure.

\begin{definition}[= Definition \ref{Def:nilpotent complex structure}]
    A complex structure $J$ on a nilpotent Lie algebra $\fg$ 
    is called \emph{nilpotent} if there exists a complex basis 
    $\{Z_i\}_{i=1}^{n}$ of $\fg^{1,0}$ such that 
    \begin{equation}\label{Equ:J_nilpotent_intro}
        [Z_i,Z_j], [Z_i,\conj{Z}_j] \in
        \Span_{\C}(\{Z_{j+1},\conj{Z}_{j+1},\ldots,Z_n,\conj{Z}_n\})
    \end{equation}
    for all $1 \le i\le j \le n$, where $\fg^{1,0}$ denotes the $\iu$-eigenspace of
    $J$ in $\fg \otimes \C$.
\end{definition}

Nilpotent complex structures were introduced in \cite{CFGU97}
in the study of compact nilmanifolds. 
In low dimensions, 
this condition is satisfied by many complex structures on 
nilpotent Lie algebras. 
See Remark \ref{Rem:nilpotent} for further details.

The main theorem of this paper is the following.

\begin{theorem}
[= Theorem \ref{Thm:main_theorem}]
\label{Thm:intro1}
    Let $G$ be a simply connected nilpotent Lie group of dimension $2n$ 
    with a left-invariant nilpotent 
    complex structure $J$. 
    Then there exists a biholomorphic map $\Phi \colon (G,J) \to \C^n$. 
    Furthermore, $\Phi$ and $\Phi^{-1}$ can be taken to be polynomial 
    in the exponential coordinates on $G$.
\end{theorem}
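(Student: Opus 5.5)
We construct $\Phi$ by induction on $n$, building holomorphic coordinates $w_1,\dots,w_n$ on $(G,J)$ one at a time so that each $w_k$ is a polynomial in exponential coordinates. We first use the nilpotent basis $\{Z_i\}$ to filter $\fg$. Set $\fg_k:=\fg\cap \Span_{\C}(\{Z_{k},\conj{Z}_{k},\ldots,Z_n,\conj{Z}_n\})$; the nilpotency condition \eqref{Equ:J_nilpotent_intro} implies that $\fg_k$ is a $J$-invariant ideal of $\fg$ and that $[\fg,\fg_k]\subset \fg_{k+1}$. In particular $\fg_n$ is a central $J$-invariant ideal of real dimension $2$. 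Let $N\subset G$ be the corresponding closed central subgroup, so $(N,J|_N)\cong\C$. The quotient $G/N$ carries an induced left-invariant complex structure, and the images of $Z_1,\dots,Z_{n-1}$ form a nilpotent basis for it. By induction there is a biholomorphism $\Psi\colon G/N\to\C^{n-1}$ that is polynomial, with polynomial inverse, in exponential coordinates.

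Next we view $\pi\colon G\to G/N$ as a holomorphic principal $N\cong\C$-bundle. It is topologically trivial, and we aim to trivialize it holomorphically by a polynomial section. Concretely, we want a function $w_n=\bar x+ f$ on $G$, where $\bar x$ is the complex-linear coordinate on $N\cong\C$ extended via exponential coordinates along the splitting $\fg=V\oplus\fg_n$ with $V$ a $J$-invariant complement. The correction term $f\circ\pi$ should be chosen so that $\bpa w_n=0$. The obstruction is a $\bpa$-closed $(0,1)$-form $\beta$ on $G/N\cong\C^{n-1}$, pulled back from the bracket structure. Its coefficients are polynomials in exponential coordinates, hence polynomials in $(\Psi,\conj\Psi)$ after composing with $\Psi^{-1}$.

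On $\C^{n-1}$ the Dolbeault lemma with polynomial data is explicit. A $\bpa$-closed $(0,1)$-form with polynomial coefficients in $z,\cz$ is $\bpa$ of a polynomial, obtained by the standard homotopy formula or by termwise antidifferentiation in $\cz_j$. This yields a polynomial $f$ with $\bpa f=\beta$, and then $w_n:=\bar x-f$ is holomorphic. Setting $\Phi=(\Psi\circ\pi,w_n)$, $\Phi$ is holomorphic and polynomial. It is bijective with polynomial inverse: given $\Psi(\pi(g))$, we recover $\pi(g)$ polynomially, and then the $N$-coordinate of $g$ is $w_n+f$. A bijective holomorphic map is biholomorphic.

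The main obstacle is making the obstruction form precise and checking that it is genuinely $\bpa$-closed, with polynomial coefficients, in the chosen coordinates. Exponential coordinates are not adapted to $J$ beyond the linear level, so the BCH formula produces higher-order terms. We must check that these terms only ever involve coordinates from earlier stages of the filtration. This triangularity is exactly what the nilpotent condition $[Z_i,\cdot]\in\Span(Z_{j+1},\ldots)$ is meant to guarantee. To track it, we would first write the left-invariant $(1,0)$-forms $\omega_k$ dual to $Z_k$ in exponential coordinates. The structure equations $d\omega_k\in\Lambda^2\langle\omega_j,\conj\omega_j\rangle_{j<k}$ give integrability of $\omega_n-\partial(\text{polynomial})$ inductively, and we would aim to prove the closedness claim by induction along this triangular structure.
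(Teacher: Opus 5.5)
Your architecture is the paper's. You produce holomorphic coordinates $w_k=z_k+(\text{polynomial in earlier coordinates})$ one at a time, and each step reduces to the polynomial $\bpa$-Poincar\'e lemma on $\C^{k-1}$. Your induction on $n$ through the central quotient $G\to G/N$ only repackages the paper's induction on $k$: the paper's operators $D_i$ are the images of $\widetilde{\cZ}_i$ under the projection $G\to G/G_k$, written in exponential coordinates. Your hypothesis that $\Psi$ is a biholomorphism plays the role of the paper's observation $\tilde A_{ij}=0$.

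\textbf{The gap.} The one non-formal step is missing. You never define $\beta$ (``pulled back from the bracket structure'') and never prove $\bpa\beta=0$: you assert it, then list it in your last paragraph as the main obstacle. Closedness is necessary for $\bpa f=\beta$ to be solvable, and it is exactly where integrability of $J$ enters.

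The route you sketch also cannot work as literally stated. If $\omega_n-\pa g$ were closed, then $d\omega_n=d\pa g=\bpa\pa g$ would be of type $(1,1)$. For the complex Heisenberg algebra ($[Z_1,Z_2]=Z_3$, a nilpotent complex structure) one has $d\omega_3=-\omega_1\wedge\omega_2$, which is of type $(2,0)$. So the correction to $\omega_n$ must be a general pulled-back $(1,0)$-form, not $\pa$ of a function.

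\textbf{How to close it in your setup.} Define $\beta$ by $\pi^*\beta=\bpa\bar x$. This form descends to $G/N$:
\begin{itemize}
\item For $\nu\in N$ we have $\bar x(g\nu)=\bar x(g)+\bar x(\nu)$, and $R_\nu=L_\nu$ is holomorphic, so $\bpa\bar x$ is $N$-invariant.
\item The vertical $(0,1)$-field $\widetilde{\cZ}_n$ is the constant field $\cZ_n$ in exponential coordinates, and $\bar x$ is complex-linear in the $\fg_n$-component. Hence $\widetilde{\cZ}_n\bar x=0$, so $\bpa\bar x$ kills vertical vectors.
\end{itemize}
Since $\pi$ is holomorphic, $\beta$ is of type $(0,1)$ and $\pi^*\bpa\beta=\bpa\bpa\bar x=0$. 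Injectivity of $\pi^*$ (as $\pi$ is a submersion) then gives $\bpa\beta=0$.

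The coefficients of $\beta$ are polynomial in $(\Psi,\conj\Psi)$. This holds because the left-invariant coframe is polynomial in exponential coordinates (nilpotency of $\ad_P$) and $\Psi^{-1}$ is polynomial. Your polynomial Dolbeault step then yields $f$, and $w_n=\bar x-f\circ\pi$ is holomorphic.

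The paper proves the same closedness by hand. It applies $[\widetilde{\cZ}_l,\widetilde{\cZ}_m]=\sum_r c^r_{lm}\widetilde{\cZ}_r$ to $z_k$, projects to get $[D_l,D_m]=\sum_r c^r_{lm}D_r$, and deduces $\bpa\alpha_k(D_l,D_m)=0$; its $\alpha_k$ is in effect the descent of $-\bpa z_k$. With this step inserted, the rest of your argument goes through: the triangular polynomial inverse, bijectivity, and the passage from exponential coordinates on $G/N$ to those on $G$.
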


We now reinterpret this consequence from the point of view of 
the lattice $\Gamma$, 
rather than the Lie group $G$. 
Let $\Gamma$ be a torsion-free finitely generated nilpotent group with $h(\Gamma)=2n$. 
By Malcev's theorem, 
there exists a uniquely determined simply connected nilpotent Lie group
$G_{\Gamma}$ in which $\Gamma$ embeds as a lattice. 
We say that $\Gamma$ is of \emph{nilpotent-complex type} 
if $G_{\Gamma}$ admits a left-invariant nilpotent complex structure. 
As an immediate consequence of Theorem \ref{Thm:intro1}, we obtain the following: 

\begin{theorem}
[= Theorem \ref{Thm:main_theorem_lattice}]
\label{Thm:intro2}
Let $\Gamma$ be a torsion-free finitely generated nilpotent group with
$h(\Gamma)=2n$.  If $\Gamma$ is of nilpotent-complex type, then $\Gamma$
admits a holomorphic polynomial
crystallographic action. 
\end{theorem}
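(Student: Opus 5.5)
We deduce Theorem \ref{Thm:intro2} directly from Theorem \ref{Thm:intro1}, following the remark after Question \ref{Que:question2_lattice}. Since $\Gamma$ is of nilpotent-complex type, its Malcev completion $G=G_\Gamma$ has real dimension $h(\Gamma)=2n$ and carries a left-invariant nilpotent complex structure $J$. Theorem \ref{Thm:intro1} gives a biholomorphism $\Phi\colon (G,J)\to\C^n$ such that $\Phi$ and $\Phi^{-1}$ are polynomial in the exponential coordinates of $G$. We define
\[
\rho(\gamma)\coloneqq \Phi\circ L_\gamma\circ\Phi^{-1}, \qquad \gamma\in\Gamma .
\]
This is a homomorphism because $L_{\gamma\delta}=L_\gamma\circ L_\delta$.

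Next we check that $\rho(\gamma)\in P_h(\C^n)$. Since $J$ is left-invariant, $L_\gamma$ is holomorphic on $(G,J)$, so $\rho(\gamma)$ is a biholomorphism of $\C^n$. For polynomiality, identify $G$ with $\fg\cong\R^{2n}$ through $\exp$. By the Baker--Campbell--Hausdorff formula, which is a finite sum because $\fg$ is nilpotent, $L_\gamma$ is a real polynomial map of $\R^{2n}$. Hence $\rho(\gamma)$ is a composition of real polynomial maps, and its components are real polynomials in the real and imaginary parts of $z\in\C^n$. A holomorphic function on $\C^n$ that is a real polynomial in $(\operatorname{Re} z,\operatorname{Im} z)$ is a polynomial in $z$. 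To see this, substitute $\operatorname{Re} z=(z+\conj{z})/2$ and $\operatorname{Im} z=(z-\conj{z})/(2\iu)$ to write it as a polynomial in $z,\conj{z}$. The Cauchy--Riemann equations force every monomial containing some $\conj{z}_j$ to vanish, by uniqueness of polynomial expansions. The same argument applies to $\rho(\gamma)^{-1}=\rho(\gamma^{-1})$. So $\rho(\gamma)$ is a holomorphic polynomial automorphism.

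Finally we check the topological conditions. The action of $\Gamma$ on $G$ by left translation is free. It is properly discontinuous because $\Gamma$ is a discrete subgroup of $G$. It is cocompact because $\Gamma$ is a lattice in a nilpotent Lie group, hence uniform, so $\Gamma\backslash G$ is compact. These three properties are invariant under conjugation by the homeomorphism $\Phi$, so they hold for the $\rho$-action on $\C^n$. We expect no real obstacle here: all the substance lies in Theorem \ref{Thm:intro1}. The only point needing care is the passage from a real polynomial to a holomorphic polynomial, which the argument above handles.
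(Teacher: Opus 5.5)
Your proposal is correct and follows essentially the same route as the paper: apply Theorem~\ref{Thm:main_theorem} to obtain the polynomial biholomorphism $\Phi$, set $\rho(\gamma)=\Phi\circ L_\gamma\circ\Phi^{-1}$, and use the Baker--Campbell--Hausdorff formula to see that these maps are polynomial. You also spell out two steps the paper leaves implicit: why a holomorphic real-polynomial map is a holomorphic polynomial, and why freeness, proper discontinuity and cocompactness transfer from the left action of $\Gamma$ on $G_\Gamma$ to $\C^n$.
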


At present, 
no counterexample to Question \ref{Que:question2_lattice} seems to be known. 
Thus it is completely unclear whether 
the converse of the preceding theorem should hold. 
As a possible necessary condition for the existence of 
a holomorphic polynomial crystallographic action, 
one might at least expect that the 
Malcev completion $G_{\Gamma}$ admits a left-invariant complex structure, 
but even this is not known. 

On the other hand, 
under an additional boundedness assumption, 
this expectation does hold. 
More precisely, 
in \cite{BD02}, 
Benoist and Dekimpe proved a uniqueness theorem for 
polynomial crystallographic actions of bounded degree,  
that is, 
for polynomial crystallographic actions 
whose degrees are uniformly bounded. 
Using this result, 
we show that if $\Gamma$ admits a 
holomorphic polynomial crystallographic action of bounded degree, 
then its Malcev completion $G_{\Gamma}$ 
admits a left-invariant complex structure.

\begin{theorem}
[= Theorem \ref{Thm:bounded-degree-necessary-condition}]
\label{Thm:intro3}
Let $\Gamma$ be a torsion-free finitely generated nilpotent group with
$h(\Gamma)=2n$.  If $\Gamma$ admits a holomorphic polynomial
crystallographic action of bounded degree, then $G_{\Gamma}$ admits a
left-invariant complex structure $J$ such that
$(G_{\Gamma},J)\simeq \C^n$.
\end{theorem}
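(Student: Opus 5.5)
We prove Theorem \ref{Thm:intro3}, where $\rho\colon\Gamma\to P_h(\C^n)$ is a holomorphic polynomial crystallographic action of bounded degree. The plan is to realify, invoke the Benoist--Dekimpe uniqueness theorem \cite{BD02}, and transport the complex structure of $\C^n$ back to $G_\Gamma$.

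First we regard $\rho$ as a real polynomial action on $\R^{2n}=\C^n$. Each holomorphic polynomial automorphism is a real polynomial diffeomorphism whose inverse is again polynomial, and the degrees stay uniformly bounded. Hence $\rho$ is a polynomial crystallographic action of bounded degree in the sense of Definition \ref{Def:real polynomial crystallographic action}, and $h(\Gamma)=2n$ matches the real dimension. On the other hand, the standard action of $\Gamma$ on $G_\Gamma\cong\R^{2n}$ by left translations, written in exponential coordinates, is a polynomial crystallographic action of bounded degree: by the Baker--Campbell--Hausdorff formula, its degree is bounded by the nilpotency class.

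Next we apply the Benoist--Dekimpe uniqueness theorem. Any two polynomial crystallographic actions of bounded degree of the same group are conjugate by a polynomial diffeomorphism. This gives a polynomial diffeomorphism $\Psi\colon G_\Gamma\to\C^n$ with
\[
    \Psi\circ L_\gamma=\rho(\gamma)\circ\Psi \qquad \text{for all } \gamma\in\Gamma .
\]
Pulling back the standard complex structure, we set $J\coloneqq\Psi^*J_{\mathrm{std}}$. This is an integrable complex structure on $G_\Gamma$. Since each $\rho(\gamma)$ is holomorphic, each $L_\gamma$ with $\gamma\in\Gamma$ preserves $J$. By construction $\Psi\colon(G_\Gamma,J)\to\C^n$ is a biholomorphism.

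The main obstacle is upgrading $\Gamma$-invariance of $J$ to $G_\Gamma$-invariance, i.e.\ showing that $J$ is left-invariant. I would use the Zariski density of $\Gamma$ in $G_\Gamma$, which is a unipotent algebraic group.
\begin{itemize}
\item In exponential coordinates, $\Psi$ and $\Psi^{-1}$ are polynomial and the group law is polynomial.
\item Hence the components of $J$ at a point $x$ are polynomial in $x$.
\item The condition $(L_g)^*J=J$ is therefore a system of polynomial identities in $(g,x)$.
\item The set of $g$ satisfying these identities for all $x$ is Zariski closed and contains $\Gamma$.
\item By Malcev, $\Gamma$ is Zariski dense in $G_\Gamma$, so the set is all of $G_\Gamma$.
\end{itemize}
Thus $J$ is left-invariant and $(G_\Gamma,J)\simeq\C^n$.

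The delicate points are the hypotheses of \cite{BD02}. I would check whether the uniqueness statement there requires the actions to be conjugate inside a specific polynomial group, or to come from a fixed "bounded degree" envelope. It may be necessary first to replace $\rho$ by an action extending to $G_\Gamma$; the same Zariski-closure argument should supply such an extension. Without that step, one obtains only a smooth conjugacy, and the polynomiality of $J$ used above would fail.
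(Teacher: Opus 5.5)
Your proposal is correct and follows the paper's proof essentially step for step. You realify $\rho$, compare it with the BCH left-translation action via the Benoist--Dekimpe theorem, pull back the standard complex structure along the resulting polynomial conjugacy, and upgrade $\Gamma$-invariance to left-invariance using Zariski density of $\Gamma$ in $G_\Gamma$. The caveat in your last paragraph is unnecessary: the cited theorem already gives a conjugating $\Psi\in P(\R^{2n})$ for any two bounded-degree polynomial crystallographic actions, so no preliminary extension of $\rho$ to $G_\Gamma$ is needed.
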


\subsection{Organization of the paper}
The paper is organized as follows:
\begin{itemize}
\item In Section~\ref{section:Coordinates and nilpotent complex structures}, 
we explain in detail the construction of exponential coordinates on nilpotent Lie groups. 
After carefully describing 
how left-invariant vector fields are expressed in these coordinates, 
we introduce the notion of a nilpotent complex structure.

\item In Section~\ref{section:Proof of the Main Theorem}, 
we prove Theorem~\ref{Thm:intro1}.

\item In Section~\ref{section:Holomorphic polynomial crystallographic action}, 
we introduce several notions in order to 
relate the theory of polynomial crystallographic actions 
to Theorem~\ref{Thm:intro1}, and then state Theorem~\ref{Thm:intro2}. 
We also prove Theorem~\ref{Thm:intro3} using 
the uniqueness theorem due to Benoist and Dekimpe.

\item In Section~\ref{section:Examples}, 
we explicitly compute, in two examples, the biholomorphisms constructed in 
Theorem~\ref{Thm:intro1} and the holomorphic polynomial crystallographic actions constructed in 
Theorem~\ref{Thm:intro2}. 
These examples may also help the reader follow the proof of Theorem~\ref{Thm:intro1}.
\end{itemize}

\addtocontents{toc}{\protect\setcounter{tocdepth}{0}}
\section*{Acknowledgments}
\addtocontents{toc}{\protect\setcounter{tocdepth}{1}}
I am deeply grateful to Hisashi Kasuya. 
Originally, 
I was only considering Question \ref{Que:question1}. 
When I told him that the biholomorphism could be chosen to be polynomial,  
he pointed out the connection with polynomial crystallographic actions 
in the sense of Dekimpe. 
This perspective gave the present paper a much richer background and motivation. 
This work was supported by JSPS KAKENHI Grant Number 24KJ0931.

\section{Coordinates and nilpotent complex structures}
\label{section:Coordinates and nilpotent complex structures}

\subsection{Global coordinates on simply connected nilpotent Lie groups}

Let $\fg$ be a nilpotent Lie algebra of real dimension $d$, 
and let $G$ be a corresponding simply connected nilpotent Lie group. 
Recall that 
the exponential map $\exp \colon \fg \to G$ is a diffeomorphism. 
Through the exponential map, 
the group law on $G$ 
can be described on $\fg$ by the Baker–Campbell–Hausdorff formula:
\[
    \begin{array}{c}
        \exp(X) \cdot \exp(Y) = \exp(\BCH(X,Y)), \\
        \BCH(X,Y) \coloneqq 
        X+Y+\frac{1}{2}[X,Y]+\frac{1}{12}([X,[X,Y]]+[Y,[Y,X]])+\cdots, 
    \end{array}
\]
for $X,Y \in \fg$. 
The first variation of this formula with respect to $Y$ is given by
\begin{align*}
    \left.\frac{d}{dt}\right|_{t=0}\BCH(X,tY) &= 
    Y+\frac{1}{2}[X,Y]+\frac{1}{12}[X,[X,Y]]
        -\frac{1}{720}[X,[X,[X,[X,Y]]]]+\cdots \\
    &=\frac{\ad_X}{1-e^{-\ad_X}} \cdot Y .
\end{align*}

Let $\mathcal{B} = (X_1, \ldots, X_d)$ be a basis of $\fg$.
We identify $\fg$ with $\R^d$ via
\[
    \psi_{\mathcal{B}} \colon \fg \xrightarrow{\ \sim\ } \R^{d}, \quad
    x_1 X_1+\cdots +x_d X_d \mapsto
    (x_1,\ldots,x_d). 
\]
Define $\kappa \coloneqq \psi_{\mathcal B} \circ \exp^{-1} 
\colon G \xrightarrow{\ \sim\ } \R^{d}$, 
which makes the following diagram commute:
\[
    \begin{tikzcd}[row sep=1.2em, column sep=2.0em]
    & \fg \arrow[dl, "\exp"'] \arrow[dr, "\psi_{\mathcal B}"] & \\
    G \arrow[rr, "\kappa"] & & \R^{d}. 
    \end{tikzcd}
\]
The map $\kappa$ gives global coordinates on $G$, 
which we call the exponential coordinates associated with $\mathcal B$.

For $V \in \fg$, let $\widetilde{V}$ 
denote the corresponding left-invariant vector field on $G$. 
We now express $\widetilde{V}$ in these global coordinates. 
Let $e$ be the identity element of $G$. 
Identifying $\fg$ with $T_e G$, for $V \in \fg$ 
the corresponding left-invariant vector field $\widetilde{V}$ 
is given at $p \in G$ by
\[
    (\widetilde{V})_p = (dL_p)_e (V) \in T_p G,
\]
where $L_p$ denotes the left translation by $p$. 
Setting $p=\exp(P)$, 
we have
\begin{align*}
    (d\exp^{-1})_p\bigl((\widetilde{V})_p\bigr) &= 
    (d\exp^{-1})_p \left( (dL_p)_e \left(\left.\frac{d}{dt}\right|_{t=0} \exp(tV) \right) \right) \\
    &= \left.\frac{d}{dt}\right|_{t=0} \exp^{-1}(L_p(\exp(tV))) \\
    &=\left.\frac{d}{dt}\right|_{t=0} \BCH(P,tV)\\
    &=V+\frac{1}{2}[P,V]+\frac{1}{12}[P,[P,V]]
    -\frac{1}{720}[P,[P,[P,[P,V]]]]+\cdots. 
\end{align*}
Strictly speaking, the terms on the left-hand side lie in $T_P \fg$, 
while the last expression lies in $\fg$. 
In the last equality we implicitly use the canonical identification
\[
    T_P \fg \xrightarrow{\ \sim\ } \fg,
    \qquad
    \left.\frac{d}{dt}\right|_{t=0} (P + tU)
    \mapsto U.
\]
Similarly, we have the canonical identification
\[
    T_{\psi_{\mathcal B}(P)} \R^{d} \xrightarrow{\ \sim\ }\R^{d},
    \quad
    a_1 \pa_{x_1}+\cdots+
    a_d \pa_{x_d} \mapsto (a_1,\ldots,a_d), 
\]
and this identification is compatible with $\psi_{\mathcal B}$, 
that is, 
the following diagram is commutative: 
\[
    \begin{tikzcd}
        T_P \fg \arrow[r, "\sim"] \arrow[d, "(d\psi_{\mathcal{B}})_P"'] 
        & \fg \arrow[d, "\psi_{\mathcal{B}}"] \\
        T_{\psi_{\mathcal B}(P)} \R^{d} \arrow[r, "\sim"] 
        & \R^{d}. 
    \end{tikzcd}
\]
Thus, $\widetilde{V}$ can be computed on $\R^{d}$ as follows:

\begin{proposition}\label{Prop:calculus_of_vector}
    Let $\widetilde{V}$ denote the left-invariant vector field on $G$
    corresponding to $V \in \fg$.
    Let
    \[
    \psi_{\mathcal B} \colon \fg \xrightarrow{\ \sim\ } \R^{d}
    \]
    be the identification determined by the basis
    $\mathcal B \coloneqq (X_1, \ldots, X_d)$.
    
    Write
    \[
        \bigl(d\psi_{\mathcal B} \circ d\exp^{-1}(\widetilde{V})\bigr)_{(x_1,\ldots,x_d)}
        =
        a_1 \pa_{x_1}
        + \cdots
        + a_d \pa_{x_d}.
    \]
    Then
    \begin{align*}
        (a_1,\ldots,a_d)
        &=
        \psi_{\mathcal B}\left(
        V + \frac{1}{2}[P,V]
        + \frac{1}{12}[P,[P,V]]
        - \frac{1}{720}[P,[P,[P,[P,V]]]]
        + \cdots
        \right) \\
        &= \psi_{\mathcal B}\left(
        \frac{\ad_P}{1-e^{-\ad_P}} \cdot V
        \right), 
    \end{align*}
    where $P=x_1X_1+\cdots+x_dX_d$. 
\end{proposition}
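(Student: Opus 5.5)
The proof assembles the computation carried out just before the statement, and I would organize it in three steps.

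First, fix a point $(x_1,\ldots,x_d)\in\R^d$ and set $P=x_1X_1+\cdots+x_dX_d$, so that $P=\psi_{\mathcal B}^{-1}(x_1,\ldots,x_d)$ and $p=\exp(P)$ is the point of $G$ with $\kappa(p)=(x_1,\ldots,x_d)$. The vector $\bigl(d\psi_{\mathcal B}\circ d\exp^{-1}(\widetilde V)\bigr)_{(x_1,\ldots,x_d)}$ is by definition $(d\psi_{\mathcal B})_P\bigl((d\exp^{-1})_p(\widetilde V_p)\bigr)$. Using $\widetilde V_p=(dL_p)_e(V)$ and $V=\frac{d}{dt}|_{t=0}\exp(tV)$, the chain rule gives
\[
(d\exp^{-1})_p(\widetilde V_p)=\left.\frac{d}{dt}\right|_{t=0}\exp^{-1}\bigl(\exp(P)\exp(tV)\bigr)=\left.\frac{d}{dt}\right|_{t=0}\BCH(P,tV).
\]

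Second, I need the derivative of $\BCH(P,tV)$ at $t=0$. Since $\fg$ is nilpotent, $\BCH$ is a finite sum of iterated brackets, so $t\mapsto\BCH(P,tV)$ is a polynomial curve in $\fg$ and can be differentiated term by term. Only the terms linear in $V$ survive. The standard closed form of the linear part is $\frac{\ad_P}{1-e^{-\ad_P}}V$, which follows from the well-known formula for the differential of the exponential map,
\[
d\exp_X=(dL_{\exp X})_e\circ\frac{1-e^{-\ad_X}}{\ad_X}.
\]
I would cite this rather than reprove it; the paper already records this first variation. In the nilpotent setting the series is finite because $\ad_P$ is nilpotent, and the function $z/(1-e^{-z})$ has Taylor series $1+\frac z2+\frac{z^2}{12}-\frac{z^4}{720}+\cdots$, which gives the explicit expansion.

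Third, pass to $\R^d$. The stated identification $T_P\fg\cong\fg$ turns the derivative of the curve into the element $\frac{\ad_P}{1-e^{-\ad_P}}V$ of $\fg$. The commutative square relating $(d\psi_{\mathcal B})_P$ to $\psi_{\mathcal B}$, which holds because $\psi_{\mathcal B}$ is linear, then shows that the coefficient vector $(a_1,\ldots,a_d)$ equals $\psi_{\mathcal B}$ of that element.

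The only substantive step is the identification of the linear part of $\BCH$ with $\frac{\ad_P}{1-e^{-\ad_P}}$. If a self-contained argument is wanted, I would consider $F(s)=\BCH(sP,tV)$, or differentiate the identity $\exp(P)\exp(tV)=\exp(\BCH(P,tV))$ using the formula for $d\exp$ above, and invert the operator $\frac{1-e^{-\ad_P}}{\ad_P}$. That operator is unipotent here, so its inverse is again a finite series.
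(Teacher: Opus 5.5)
Your proposal is correct and is essentially the paper's own argument. It reduces $(d\exp^{-1})_p(\widetilde{V}_p)$ to $\left.\frac{d}{dt}\right|_{t=0}\BCH(P,tV)$ by the chain rule, identifies this with $\frac{\ad_P}{1-e^{-\ad_P}}V$, and transfers the result to $\R^d$ via the compatibility of $T_P\fg\cong\fg$ and $T\R^d\cong\R^d$ with the linear map $\psi_{\mathcal B}$. The paper only records the first-variation formula as known, whereas you derive it from $d\exp_X=(dL_{\exp X})_e\circ\frac{1-e^{-\ad_X}}{\ad_X}$; that derivation is correct and fills in the one step the paper leaves unproved.
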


\begin{example}
    Let $\fh_3$ be the Heisenberg Lie algebra of dimension $3$, 
    that is, 
    there is a basis $\mathcal{B} \coloneqq (X_1,X_2,X_3)$ of $\fh_3$ 
    such that $[X_1,X_2]=X_3$ and all other brackets vanish. 
    We identify the corresponding simply connected Lie group $H_3$
    with $\R^3$ via global coordinates induced by the exponential map
    and the identification $\psi_{\mathcal B} \colon \fh_3 \xrightarrow{\ \sim\ } \R^3$. 
    Now, since
    \[
        \left\{
        \begin{aligned}
        &X_1 + \frac{1}{2}[P,X_1]=X_1-\frac{1}{2}x_2X_3 \\
        &X_2 + \frac{1}{2}[P,X_2]=X_2+\frac{1}{2}x_1X_3 \\
        &X_3 + \frac{1}{2}[P,X_3]=X_3, 
        \end{aligned}
        \right. 
    \]
    where $P=x_1X_1+x_2X_2+x_3X_3$, 
    the left-invariant vector fields $\widetilde{X}_i$ on $H_3$
    are expressed in these coordinates as
    \[
        \left\{
        \begin{aligned}
        &\widetilde{X}_1=\pa_{x_1}-\frac{1}{2}x_2 \pa_{x_3} \\
        &\widetilde{X}_2=\pa_{x_2}+\frac{1}{2}x_1 \pa_{x_3} \\
        &\widetilde{X}_3=\pa_{x_3}.  
        \end{aligned}
        \right. 
    \]
\end{example}

\subsection{Global complex coordinates on simply connected nilpotent Lie groups 
with left-invariant complex structures}
\label{subsection:Global complex coordinates}

Let $\fg$ be a nilpotent Lie algebra of real dimension $2n$. 
An \emph{almost-complex structure} $J$ on $\fg$ is a linear map 
$J \colon \fg \to \fg$ such that $J^2 = -\id_{\fg}$. 
Let $\fg_{\C} \coloneqq \fg \otimes \C$ be the complexification of $\fg$. 
We decompose $\fg_{\C}$ into the eigenspaces of $J$, 
and denote by $\fg^{1,0}$ and $\fg^{0,1}$ 
the eigenspaces corresponding to the eigenvalues $\iu$ and $-\iu$, respectively. 
The almost-complex structure $J$ is said to be \emph{integrable} if 
$\fg^{1,0}$ is a Lie subalgebra of $\fg_{\C}$. 
In this case, 
$J$ is simply called a \emph{complex structure}. 

Let $G$ be a simply connected nilpotent Lie group associated to $\fg$. 
A complex structure $J$ on $\fg$ induces a left-invariant complex structure on $G$, which is also denoted by $J$. 
In this way, 
$(G,J)$ is a simply connected complex manifold of complex dimension $n$.

Let $\{Z_i\}_{i=1}^{n}$ be a basis of $\fg^{1,0}$ and write
$Z_i=(X_i-\iu Y_i)/2$. 
Then the basis $\mathcal{B}=(X_1,Y_1,\ldots,X_n,Y_n)$ induces
global coordinates on $G$ via the map
$\kappa \colon G \to \R^{2n}$. 
By identifying $\R^{2n}$ naturally with $\C^n$, 
we obtain complex coordinates on $G$. 
(Note that these coordinates are not compatible with the complex structure $(G,J)$.) 
We now compute the expression of $\widetilde{V}$ 
in these complex coordinates for $V \in \fg_{\C}$. 
Taking into account that 
$x_i X_i + y_i Y_i = z_i Z_i + \overline{z}_i\,\overline{Z}_i$, 
and applying Proposition \ref{Prop:calculus_of_vector}, 
we obtain the following expression for $\widetilde{V}$ 
in these coordinates: 

\begin{align*}
    \bigl(d\psi_{\mathcal B_{\C}} \circ d\exp^{-1}(\widetilde{V})\bigr)_{(z_1,\ldots,z_n)}
    &=
    a_1 \pa_{z_1} +b_1 \pa_{\conj{z}_1}
    + \cdots
    + a_n \pa_{z_n} +b_n \pa_{\conj{z}_n}, \\
    (a_1,b_1, \ldots,a_n, b_n)
    &=
    \psi_{\mathcal B_{\C}}\left(
    V + \frac{1}{2}[P,V]
    + \frac{1}{12}[P,[P,V]]
    - \frac{1}{720}[P,[P,[P,[P,V]]]]
    + \cdots
    \right), 
\end{align*}
where
$\psi_{\mathcal B_{\C}} \colon \fg_{\C} \xrightarrow{\ \sim\ } \C^{2n}$
is the identification determined by the basis
$\mathcal B_{\C} \coloneqq (Z_1, \conj{Z}_1, \ldots, Z_n, \conj{Z}_n)$ 
and 
$P=z_1Z_1+\conj{z}_1\conj{Z}_1+\cdots+z_nZ_n+\conj{z}_n\conj{Z}_n$. 
See Examples \ref{Ex:2step} and \ref{Ex:3step} 
for explicit computations illustrating this procedure.

\subsection{Nilpotent complex structures}

Let $J$ be a complex structure on a nilpotent Lie algebra $\fg$. 
We define $J$ to be \emph{nilpotent} as follows: 

\begin{definition}\label{Def:nilpotent complex structure}
    A complex structure $J$ on a nilpotent Lie algebra $\fg$ 
    is called \emph{nilpotent} if 
    there exists a complex basis $\{Z_i\}_{i=1}^{n}$ of $\fg^{1,0}$
    such that 
    \begin{equation}\label{Equ:J_nilpotent}
        [Z_i,Z_j], [Z_i,\conj{Z}_j] \in
        \Span_{\C}(\{Z_{j+1},\conj{Z}_{j+1},\ldots,Z_n,\conj{Z}_n\})
    \end{equation}
    for all $1 \le i\le j \le n$. 
\end{definition}

\begin{remark}\label{Rem:nilpotent}
    Nilpotent complex structures were introduced in \cite{CFGU97}
    in the study of compact nilmanifolds, 
    and were further studied in \cite{CFGU99}, \cite{CFGU00}. 
    They can also be defined without choosing a basis, for example in terms of a
    $J$-compatible ascending series. 
    The complex structure underlying a nilpotent complex Lie algebra 
    is nilpotent in this sense. 
    This condition is quite common in low dimensions: 
    in real dimension $4$, 
    every complex structure on a nilpotent Lie algebra is nilpotent, 
    while in real dimension $6$, 
    $18$ of the $34$ non-isomorphic nilpotent Lie algebras admit 
    complex structures, 
    and $16$ non-isomorphic classes admit nilpotent complex structures. 
    See \cite{COUV16} and the references therein.
\end{remark}

For a basis $\{Z_i\}_{i=1}^{n}$ of $\fg^{1,0}$ satisfying (\ref{Equ:J_nilpotent}), 
we compute the expression of $\widetilde{\conj{Z}}_i$ in complex coordinates 
for $1\le i \le n$. 
Let $P=z_1Z_1+\conj{z}_1\conj{Z}_1+\cdots+z_nZ_n+\conj{z}_n\conj{Z}_n$. 
From the expression
\[
    \conj{Z}_i + \frac{1}{2}[P,\conj{Z}_i]
    + \frac{1}{12}[P,[P,\conj{Z}_i]]
    - \frac{1}{720}[P,[P,[P,[P,\conj{Z}_i]]]]
    + \cdots 
\]
and (\ref{Equ:J_nilpotent}), 
we see that $\widetilde{\conj{Z}}_i$ can be expressed as 
\begin{equation}\label{Equ:cZ_i}
    \widetilde{\cZ}_i = \pa_{\cz_i}+ 
    \sum_{j=i+1}^n \left(A_{ij} \pa_{z_j}
    + B_{ij} \pa_{\cz_j} \right), 
\end{equation}
where $A_{ij}, B_{ij}$ are polynomials in 
$\C[z_1,\ldots,z_{j-1},\cz_1,\ldots,\cz_{j-1}]$ with no constant term. 
In particular, 
$\widetilde{\conj{Z}}_n=\pa_{\cz_n}$. 

\section{Proof of the Main Theorem}
\label{section:Proof of the Main Theorem}

In this section, 
we show the following theorem:

\begin{theorem}\label{Thm:main_theorem}
    Let $G$ be a simply connected nilpotent Lie group of dimension $2n$ 
    with a left-invariant nilpotent 
    complex structure $J$. 
    Then there exists a biholomorphic map $\Phi \colon (G,J) \to \C^n$. 
    Furthermore, $\Phi$ and $\Phi^{-1}$ can be taken to be polynomial 
    in the exponential coordinates on $G$.
\end{theorem}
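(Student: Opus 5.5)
We will construct holomorphic coordinates $w_1,\ldots,w_n$ on $(G,J)$ inductively, working in the (non-holomorphic) complex exponential coordinates $(z_1,\ldots,z_n)$ of Section~\ref{subsection:Global complex coordinates} attached to a basis $\{Z_i\}$ satisfying (\ref{Equ:J_nilpotent}). A smooth function $f$ on $G$ is $J$-holomorphic if and only if $\widetilde{\cZ}_i f=0$ for all $i$, since the $\widetilde{\cZ}_i$ span $T^{0,1}G$ pointwise. We look for $w_k$ of the form
\[
    w_k = z_k + F_k(z_1,\ldots,z_{k-1},\cz_1,\ldots,\cz_{k-1}),
\]
with $F_k$ a polynomial. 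Then $(z_1,\ldots,z_n)\mapsto (w_1,\ldots,w_n)$ is triangular with identity diagonal part, hence a polynomial diffeomorphism $\R^{2n}\to\R^{2n}$ with polynomial inverse. The inverse is obtained by solving successively $z_k=w_k-F_k(z_{<k},\cz_{<k})$. Since the $w_k$ are holomorphic, this map is the desired $\Phi$.

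By (\ref{Equ:cZ_i}), for $i\ge k$ we have $\widetilde{\cZ}_i w_k=0$ automatically: such $\widetilde{\cZ}_i$ only contain $\pa_{z_j},\pa_{\cz_j}$ with $j\ge i\ge k$, and $w_k$ depends on these only through the term $z_k$, whose coefficient in $\widetilde{\cZ}_i$ vanishes (it is $0$ for $i=k$, and $i>k$ gives $j>k$). So the conditions to solve are, for $i<k$,
\[
    \widetilde{\cZ}_i F_k = -A_{ik},
\]
where $\widetilde{\cZ}_i$ acts on functions of $(z_{<k},\cz_{<k})$ through its truncation involving only $j<k$. We solve these by induction on $k$. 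The key step is to switch to the coordinates $(w_1,\ldots,w_{k-1})$ and their conjugates on the quotient-like space of the first $k-1$ variables. In these coordinates the truncated operators $\widetilde{\cZ}_i$ ($i<k$) annihilate $w_1,\ldots,w_{k-1}$ and are triangular in $\pa_{\cw_j}$. Hence they span the same module as $\pa_{\cw_1},\ldots,\pa_{\cw_{k-1}}$, with a unipotent polynomial change of frame. The system thus becomes a $\bpa$-problem $\pa_{\cw_j}F_k = g_j$ with polynomial data $g_j$ in $(w,\cw)$.

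The compatibility condition $\pa_{\cw_j}g_l=\pa_{\cw_l}g_j$ should come from integrability. The brackets $[\widetilde{\cZ}_i,\widetilde{\cZ}_l]$ lie in the span of the $\widetilde{\cZ}_m$, because $\fg^{0,1}$ is a subalgebra, and applying this to $\widetilde{\cZ}\,w_k$-type expressions gives closedness of the $(0,1)$-form $\sum g_j\,d\cw_j$. For polynomial data a solution is explicit: integrate in $\cw_1$ (antiderivative of a polynomial), subtract, and proceed to $\cw_2$, and so on. This is the standard Dolbeault lemma on $\C^{k-1}$ for polynomial forms, and it yields a polynomial $F_k$, which is also polynomial in $(z,\cz)$ since the coordinate change is polynomial. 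An alternative that avoids the compatibility bookkeeping is to note directly that the nilpotent condition makes $\widetilde{\cZ}_{k-1}$ act as $\pa_{\cz_{k-1}}$ plus lower-order terms, and to solve triangularly from $i=k-1$ down to $i=1$. Either way, the compatibility check is the main obstacle.

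Finally, $\Phi=(w_1,\ldots,w_n)\circ\kappa$ is holomorphic and bijective with polynomial inverse, hence a biholomorphism $(G,J)\to\C^n$ that is polynomial in exponential coordinates, along with its inverse.
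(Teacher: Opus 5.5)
Your proposal is essentially the paper's proof. It uses the same triangular ansatz $w_k=z_k+H_k(z_{<k},\cz_{<k})$, the same reduction to a polynomial $\bpa$-problem on $\C^{k-1}$ via the truncated fields $D_i$ (which form a unipotent frame of $(0,1)$-vectors in the $w$-coordinates), and a polynomial Dolbeault lemma; the paper uses the homotopy formula $\int_0^1\sum_i \cw_i F_i(w,t\cw)\,dt$ where you take successive antiderivatives. The closedness you flag as the main obstacle is handled in the paper along the lines you indicate: applying $[\widetilde{\cZ}_l,\widetilde{\cZ}_m]=\sum_r c^r_{lm}\widetilde{\cZ}_r$ to $z_k$ gives $\widetilde{\cZ}_l(A_{mk})-\widetilde{\cZ}_m(A_{lk})=\sum_{r<k}c^r_{lm}A_{rk}$, and its components in the first $k-1$ directions give $[D_l,D_m]=\sum_{r<k}c^r_{lm}D_r$, so the form $\alpha_k$ with $\alpha_k(D_i)=-A_{ik}$ satisfies $\bpa\alpha_k(D_l,D_m)=0$ (your triangular ``alternative'' needs exactly this condition as well, so it does not bypass it).
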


Before proving the theorem, 
we establish the following lemma, 
which may be regarded as a polynomial version of the $
\bpa$-Poincar\'{e} lemma.

\begin{lemma}\label{Lem:Poincare}
    Let $m$ be a positive integer, 
    and denote by $w=(w_1,\ldots,w_m)$ the coordinates on $\C^m$. 
    Suppose that
    \[
    \alpha = \sum_{i=1}^m F_i(w,\conj w) d \conj w_i, 
    \quad F_i \in \C[w_1,\ldots,w_m,\conj w_1, \ldots , \conj w_m] 
    \]
    is a $\bpa$-closed $(0,1)$-form. 
    Then there exists a polynomial
    $G \in \C[w_1,\ldots,w_m,\conj w_1, \ldots , \conj w_m]$
    such that $\bpa G =\alpha$.
\end{lemma}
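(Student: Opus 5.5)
We prove Lemma \ref{Lem:Poincare} by induction on the number of variables $\conj w_i$ that actually occur in the form, following the standard proof of the Dolbeault--Grothendieck lemma. In that proof the Cauchy integral is replaced by formal antidifferentiation in $\conj w_k$. The key point is that a polynomial in $w,\conj w$ always has a polynomial antiderivative with respect to $\conj w_k$. Indeed, for a monomial we take
\[
    \int w^a \conj w^b \, d\conj w_k = \frac{1}{b_k+1}\, w^a \conj w^{b+e_k},
\]
which is well defined since $b_k+1\neq 0$. Extending linearly gives a $\C$-linear operator $I_k$ on $\C[w,\conj w]$ with $\pa_{\conj w_k} I_k = \id$. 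Moreover, $I_k$ commutes with $\pa_{\conj w_j}$ for $j\neq k$ and with all $\pa_{w_j}$, because it acts monomialwise and only changes the exponent of $\conj w_k$.

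Let $\alpha=\sum_{i=1}^m F_i\, d\conj w_i$. The condition $\bpa\alpha=0$ means $\pa_{\conj w_j}F_i=\pa_{\conj w_i}F_j$ for all $i,j$. For $0\le k\le m$ we prove the following claim: if $F_i=0$ for all $i>k$, then there is a polynomial $G$ with $\bpa G=\alpha$.

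The case $k=0$ is trivial: take $G=0$.

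For the inductive step, assume $F_i=0$ for $i>k$ and set $H\coloneqq I_k(F_k)$, so that $\pa_{\conj w_k}H=F_k$. For $j>k$, closedness gives $\pa_{\conj w_k}F_j=\pa_{\conj w_j}F_k$, and the left side vanishes since $F_j=0$. Hence $\pa_{\conj w_j}F_k=0$, and by the commutation property,
\[
    \pa_{\conj w_j}H = I_k(\pa_{\conj w_j}F_k) = 0 \qquad (j>k).
\]
Now put $\alpha'\coloneqq\alpha-\bpa H$. This form is again $\bpa$-closed with polynomial coefficients. Its $d\conj w_k$-coefficient is $F_k-F_k=0$, and its coefficients for $j>k$ are $0-\pa_{\conj w_j}H=0$. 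So $\alpha'$ involves only $d\conj w_1,\ldots,d\conj w_{k-1}$. By induction there is a polynomial $G'$ with $\bpa G'=\alpha'$, and then $G\coloneqq G'+H$ satisfies $\bpa G=\alpha$. Taking $k=m$ proves the lemma.

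The one step that needs care is checking that $\pa_{\conj w_j}H=0$ for $j>k$. This is where the closedness hypothesis enters, and it relies on $I_k$ commuting with $\pa_{\conj w_j}$. That commutation is immediate for the monomialwise definition of $I_k$. In the analytic proof the corresponding step is differentiation under the integral sign; here it is exact and purely algebraic, and no compact support or cutoff argument is needed because everything is a polynomial.
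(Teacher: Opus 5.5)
Your proof is correct, but it takes a different route from the paper's. The paper does not induct on the number of $d\conj w_i$ that occur. Instead it writes down the radial homotopy operator in the antiholomorphic variables, with $w$ held fixed:
\[
G(w,\conj w)=\int_0^1 \sum_{i=1}^m \conj w_i\,F_i(w,t\conj w)\,dt .
\]
It then checks $\bpa G=\alpha$ through the single identity $\pa_{\conj w_j}G=\int_0^1 \frac{d}{dt}\bigl(t\,F_j(w,t\conj w)\bigr)\,dt=F_j$. Here closedness, in the form $\pa_{\conj w_j}F_i=\pa_{\conj w_i}F_j$, is what turns $\sum_i \conj w_i(\pa_{\conj w_j}F_i)(w,t\conj w)$ into $\frac{d}{dt}F_j(w,t\conj w)$. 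On monomials this operator sends $w^a\conj w^b\,d\conj w_i$ to $\frac{1}{|b|+1}w^a\conj w^{b+e_i}$, so it weights by the total $\conj w$-degree. Your $I_k$ instead weights by the single exponent $b_k$ and removes one direction at a time.

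The paper's version buys brevity and one explicit linear formula. It is the de Rham Poincar\'{e} homotopy in the $\conj w$-variables, which makes sense here because polynomial coefficients can be evaluated at $(w,t\conj w)$. The cost is that the paper leaves the ``direct computation'' to the reader. Your argument is longer, but every step is elementary and fully checked. The use of closedness is isolated in one place, namely the vanishing of $\pa_{\conj w_j}H$ for $j>k$. It is the algebraic counterpart of the standard Dolbeault--Grothendieck proof and needs only one-variable antiderivatives.
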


\begin{proof}
    As in the usual Poincar\'{e} lemma, set
    \[
    G(w,\conj w) = \int_0^1 \left( \sum_{i=1}^m \conj w_i 
    F_i(w,t \conj w) \right)dt. 
    \]
    Since each $F_i$ is a polynomial, 
    the function $G$ is also a polynomial. 
    A direct computation shows that $\bpa G =\alpha$.
\end{proof}

\begin{proof}[Proof of Theorem \ref{Thm:main_theorem}]

Let $\{Z_i\}_{i=1}^{n}$ be a basis of $\fg^{1,0}$ satisfying
(\ref{Equ:J_nilpotent}).
Using this basis, we take complex coordinates as in Subsection
\ref{subsection:Global complex coordinates}.
Then the $(0,1)$-vector fields
$\{ \widetilde{\conj{Z}}_i \}_{i=1}^n$ on $(G,J)$ can be expressed as in
(\ref{Equ:cZ_i}).

We now construct, by induction on $k=1,\ldots,n$, polynomials
\begin{equation}\label{Equ:w_k}
    w_k = z_k + H_k(z_1,\ldots,z_{k-1},\cz_1,\ldots,\cz_{k-1})
\end{equation}
such that
\begin{equation}\label{Equ:condition_of_w}
    \widetilde{\conj{Z}}_i (w_k) = 0
\end{equation}
for all $1 \le i \le n$.
Once this is done, 
a map 
\[
\Phi=(w_1,\ldots,w_n) \colon (G,J) \to \C^n
\]
is holomorphic, 
and the triangular form of $w_k$ in (\ref{Equ:w_k}) 
implies that its inverse is also a polynomial map.

For $k=1$, the choice $w_1=z_1$ satisfies the condition.
Assume that $w_1,\ldots,w_{k-1}$ have been constructed so as to satisfy the condition, and we construct $w_k$.
Since
$\{z_1,\ldots,z_{k-1},\cz_1,\ldots,\cz_{k-1}\}$ are polynomials in
$\{w_1, \ldots,w_{k-1},\cw_1, \ldots,\cw_{k-1}\}$,
it is enough to construct $H_k$ as a polynomial in
$\{w_1, \ldots,w_{k-1},\cw_1, \ldots,\cw_{k-1}\}$.
For $k \le i$, the condition (\ref{Equ:condition_of_w}) is automatically satisfied.
Thus it remains to construct $H_k$ so that 
(\ref{Equ:condition_of_w}) holds for all $i<k$.

For $1 \le i <k$, define $C_i$ and $D_i$ by
\[
    C_i = \sum_{j=k+1}^n \left(A_{ij} \pa_{z_j}
    + B_{ij} \pa_{\cz_j} \right), \quad
    D_i = \pa_{\cz_i} + \sum_{j=i+1}^{k-1} \left(A_{ij} \pa_{z_j}
    + B_{ij} \pa_{\cz_j} \right).
\]
Then we decompose $\widetilde{\conj{Z}}_i$ as
\[
\widetilde{\conj{Z}}_i = D_i + A_{ik} \pa_{z_k} + B_{ik} \pa_{\cz_k} + C_i.
\]
Since $C_i(H_k)=0$ and $\widetilde{\conj{Z}}_i(z_k)=A_{ik}$, we have
\begin{equation}\label{Equ:D(H)=-A}
\widetilde{\conj{Z}}_i(w_k)=0 \quad \Longleftrightarrow \quad
D_i(H_k) = -A_{ik}.
\end{equation}

We now express $D_i$ as a differential operator on $\C^{k-1}$ with coordinates
$w=(w_1,\ldots,w_{k-1})$.
By (\ref{Equ:w_k}), it can be written as
\[
    D_i = \pa_{\cw_i} + \sum_{j=i+1}^{k-1} \left(\tilde{A}_{ij} \pa_{w_j}
    + \tilde{B}_{ij} \pa_{\cw_j} \right),
\]
where $\tilde{A}_{ij}, \tilde{B}_{ij}$ are polynomials in
$\C[w,\cw]$.
Since
\[
D_i (w_j) = \widetilde{\conj{Z}}_i (w_j) = 0 \quad (1 \le j <k),
\]
we have $\tilde{A}_{ij}=0$.
Therefore, the vector fields $D_1,\ldots,D_{k-1}$ form a frame of $(0,1)$-vector fields in the $w$-coordinates.
It follows from this expression that we can take a $(0,1)$-form $\alpha_k$ with polynomial coefficients such that
\[
\alpha_k(D_i) = -A_{ik} \quad (1 \le i <k).
\]
We claim that $\bpa \alpha_k=0$.
Indeed, by the integrability and left-invariance of
$\{ \widetilde{\conj{Z}}_i \}_{i=1}^n$, for $1 \le l,m <k$ we can write
\begin{equation}\label{Equ:cZ_integrable}
[\widetilde{\cZ}_l, \widetilde{\cZ}_m]=
\sum_{r=1}^n c_{lm}^r \widetilde{\cZ}_r
\end{equation}
for some constants $c_{lm}^r \in \C$.
Applying this identity to $z_k$, we obtain
\[
\widetilde{\cZ}_l (A_{mk})-
\widetilde{\cZ}_m (A_{lk}) =
\sum_{r=1}^{k-1} c_{lm}^r A_{rk}.
\]
Taking the components of (\ref{Equ:cZ_integrable}) in the directions
$\pa_{z_1},\cdots,\pa_{z_{k-1}}, \pa_{\cz_1}, \ldots ,\pa_{\cz_{k-1}}$,
we get
\[
[D_l,D_m]=\sum_{r=1}^{k-1} c_{lm}^r D_r.
\]
Hence
\begin{align*}
    \bpa \alpha_k (D_l, D_m)
    &=-D_l A_{mk} + D_m A_{lk} - \alpha_k([D_l,D_m]) \\
    &=-\widetilde{\cZ}_l (A_{mk})+
    \widetilde{\cZ}_m (A_{lk}) +
    \sum_{r=1}^{k-1} c_{lm}^r A_{rk} \\
    &=0.
\end{align*}
Therefore, $\bpa \alpha_k=0$.
By Lemma \ref{Lem:Poincare}, there exists a polynomial $H_k$ in
$w_1, \ldots,w_{k-1},\cw_1, \ldots,\cw_{k-1}$ such that
$\bpa H_k = \alpha_k$.
For this choice of $H_k$, we have
\[
D_i(H_k)=\bpa H_k(D_i) = \alpha_k (D_i) = -A_{ik},
\]
and hence (\ref{Equ:D(H)=-A}) is satisfied.
Thus we have constructed $w_k$ satisfying (\ref{Equ:condition_of_w}).
This completes the proof.

\end{proof}

\section{Holomorphic polynomial crystallographic action}
\label{section:Holomorphic polynomial crystallographic action}

The preceding sections are concerned with a simply connected nilpotent Lie group
$G$ endowed with a left-invariant nilpotent complex structure $J$.  
In this section we record a consequence for lattices in $G$ 
and place it in the context of polynomial crystallographic actions 
in the sense of Dekimpe, Igodt and Lee. 
The point of view is the following: 
we fix a lattice $\Gamma\subset G$ and ask
whether $\Gamma$ can be realized as the fundamental group of a 
compact quotient of $\C^n$ by holomorphic polynomial automorphisms. 

We first recall the group-theoretic language used below.  Let $\Gamma$ be a
finitely generated nilpotent group.  Its \emph{Hirsch length} $h(\Gamma)$ is the
number of infinite cyclic factors in a polycyclic series for $\Gamma$, 
equivalently,
if
\[
  1=\Gamma_0\triangleleft \Gamma_1\triangleleft\cdots\triangleleft\Gamma_r=\Gamma
\]
is a subnormal series whose factors $\{\Gamma_i/\Gamma_{i-1}\}_{i=1}^r$
are finitely generated abelian groups, then
\[
  h(\Gamma) \coloneqq \sum_{i=1}^r \rank_{\Z}(\Gamma_i/\Gamma_{i-1}).
\]
This integer is independent of the chosen series.  We shall use the following
standard form of Malcev's theorem.

\begin{theorem}[\cite{Mal51}]
\label{thm:malcev}
Let $\Gamma$ be a torsion-free finitely generated nilpotent group.  Then there
exists a connected and simply connected nilpotent Lie group $G_\Gamma$ into
which $\Gamma$ embeds as a lattice.  Moreover, $G_\Gamma$ is unique up to a
unique Lie group isomorphism compatible with the embedding of $\Gamma$, and
\[
  \dim_{\R}G_\Gamma=h(\Gamma).
\]
\end{theorem}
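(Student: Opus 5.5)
The statement is Malcev's theorem, a classical result cited from \cite{Mal51}. I would not reprove it from scratch but sketch the standard argument in three stages: existence, uniqueness, and the dimension count.

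\textbf{Existence.} I would induct on the Hirsch length using a central series. Since $\Gamma$ is torsion-free, finitely generated and nilpotent, it has a central series
\[
1=\Gamma_0\triangleleft\Gamma_1\triangleleft\cdots\triangleleft\Gamma_m=\Gamma
\]
with every factor infinite cyclic. To get one, refine the upper central series, using that each upper central quotient of a torsion-free nilpotent group is torsion-free. Choose generators $\gamma_i$ with $\Gamma_i=\langle\Gamma_{i-1},\gamma_i\rangle$. Every element is then uniquely $\gamma_1^{t_1}\cdots\gamma_m^{t_m}$ with $t_i\in\Z$. The key step, due to P. Hall, is that multiplication and inversion are given by polynomial functions of the exponents with rational coefficients. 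I would prove this by induction on $m$, via collection in the central extension $\Gamma_{m-1}\to\Gamma\to\Gamma/\Gamma_{m-1}$, where the associated cocycle is polynomial.

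Because an integer-valued identity between polynomials holds on all of $\R^m$ once it holds on $\Z^m$ (Zariski density), the same polynomial formulas define a group law on $\R^m$. This gives a simply connected nilpotent Lie group $G_\Gamma$ containing $\Gamma$ as the integer points. The subset $\Gamma$ is discrete, and $[0,1]^m$ is a compact set whose $\Gamma$-translates cover $G_\Gamma$, which one checks by induction along the series. Hence $\Gamma$ is a lattice and $\dim G_\Gamma=m$. This is the step I expect to be the main obstacle, namely the polynomiality of the group law.

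\textbf{Uniqueness.} Let $\Gamma\subset G$ and $\Gamma\subset G'$ be two such lattice embeddings. Pass to Lie algebras via the exponential diffeomorphism and the Baker--Campbell--Hausdorff formula of Section~\ref{section:Coordinates and nilpotent complex structures}. The set $\log\Gamma$ spans a $\mathbb Q$-form of $\fg$, and this $\mathbb Q$-form is determined intrinsically by the rational Malcev completion $\Gamma\otimes\mathbb Q$. Hence $\fg\cong(\log\Gamma)_{\mathbb Q}\otimes\R$ canonically. The homomorphism $\Gamma\to G'$ extends uniquely to a continuous homomorphism $G\to G'$, because a homomorphism from a lattice in a simply connected nilpotent group extends uniquely, by Zariski density in the unipotent algebraic group. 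The same argument gives a map back, so the extension is an isomorphism, unique and compatible with the embeddings.

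\textbf{Dimension.} The identity $\dim_\R G_\Gamma=h(\Gamma)$ follows from the construction, since both numbers equal the length $m$ of a series with infinite cyclic factors. Independence from the chosen series is the standard invariance of Hirsch length, or follows from the uniqueness statement above.
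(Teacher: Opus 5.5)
The paper gives no proof of this statement. It is quoted from \cite{Mal51} and used as a black box, so deferring to the literature is exactly what the paper does, and at that level your proposal agrees with it. Read as an actual proof sketch, however, the step you yourself single out as the main obstacle is misdescribed and left unproved.

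With your indexing ($\Gamma_1$ central, $\Gamma_m=\Gamma$), the extension $\Gamma_{m-1}\to\Gamma\to\Gamma/\Gamma_{m-1}\cong\Z$ is not central. Already in $H_3(\Z)$, $\Gamma_2$ has rank $2$ while the centre has rank $1$. This extension splits as $\Gamma=\Gamma_{m-1}\rtimes\langle\gamma_m\rangle$, so there is no cocycle. What must be controlled is the automorphism $\theta(y)=\gamma_m y\gamma_m^{-1}$ of $\Gamma_{m-1}$. The central extension is $\Gamma_1\to\Gamma\to\Gamma/\Gamma_1$, but there ``the cocycle is polynomial'' does not follow formally from the inductive hypothesis; it is the whole content of Hall's theorem.

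The missing idea is the unipotent recursion. Since $[\gamma_m,\Gamma_i]\subseteq\Gamma_{i-1}$, $\theta$ is trivial on every factor $\Gamma_i/\Gamma_{i-1}$. Hence in coordinates $y=\gamma_1^{y_1}\cdots\gamma_{m-1}^{y_{m-1}}$ one has $(\theta y)_i=y_i+r_i(y_{i+1},\dots,y_{m-1})$ with $r_i$ polynomial. This uses the inductive hypothesis together with polynomiality of $a\mapsto g^a$, which comes from the analogous recursion $(g^{a+1})_i=(g^a)_i+g_i+p_i((g^a)_{>i},g_{>i})$. Then, by downward induction on $i$, $(\theta^{t+1}y)_i-(\theta^t y)_i$ is polynomial in $t$. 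A discrete antiderivative of a polynomial is a polynomial, so $(\theta^t y)_i$ is polynomial in $t\in\Z$. Finally $x'\gamma_m^{a}\cdot y'\gamma_m^{b}=x'\theta^{a}(y')\gamma_m^{a+b}$ gives polynomial multiplication, and inversion is analogous.

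Two smaller points. First, in the uniqueness part, Zariski density of $\Gamma$ gives only \emph{uniqueness} of a continuous extension $G\to G'$: two such extensions are linear in exponential coordinates and agree on $\Gamma$. \emph{Existence} is Malcev's rigidity theorem and needs a further argument. For example, show that the projection onto $G$ of the Malcev closure of the graph of $\Gamma\to G'$ in $G\times G'$ is an isomorphism. Alternatively, show that the $\mathbb{Q}$-span of $\log\Gamma$ is a Lie algebra on which $\log\circ\phi\circ\exp$ extends to a $\mathbb{Q}$-Lie algebra homomorphism. Your remark that this $\mathbb{Q}$-form is ``determined intrinsically by $\Gamma\otimes\mathbb{Q}$'' presupposes uniqueness of the rational Malcev completion, which is of the same depth as the claim being proved.

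Second, you should record that the group law you obtain on $\R^m$ is nilpotent. This follows from your own density argument applied to the identity saying that $(c+1)$-fold iterated commutators vanish, where $c$ is the nilpotency class of $\Gamma$.
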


We call $G_\Gamma$ the real Malcev completion of $\Gamma$. 
Throughout this
section, $\Gamma$ denotes a torsion-free finitely generated nilpotent group. 
We will mainly consider the even-dimensional case, namely the case where
\[
  h(\Gamma)=\dim_{\R}G_\Gamma=2n
\]
for some integer $n\ge 2$. 

\subsection{Real and holomorphic polynomial crystallographic action}

Let $P(\R^m)$ be the group of polynomial diffeomorphisms of $\R^m$, namely
polynomial maps whose inverse is again polynomial.  

\begin{definition}
\label{Def:real polynomial crystallographic action}
Let $\Gamma$ be a torsion-free finitely generated nilpotent group 
with $h(\Gamma)=m$. 
A \emph{real polynomial
crystallographic action} of $\Gamma$ is a homomorphism 
\[
  \rho:\Gamma \to P(\R^m)
\]
such that the induced action of $\Gamma$ on $\R^m$ is free, properly
discontinuous and cocompact. 
\end{definition}
This is the polynomial analogue of an affine crystallographic action, 
and was introduced and studied 
by Dekimpe, Igodt, and Lee in \cite{DIL96}. 
In this paper, 
we formulate a holomorphic analogue. 
Let $P_h(\C^n)$ be the group of holomorphic polynomial automorphisms of $\C^n$, 
namely holomorphic polynomial maps whose inverse is again polynomial. 

\begin{definition}
\label{Def:holomorphic polynomial crystallographic action}
Let $\Gamma$ be a torsion-free finitely generated nilpotent group with $h(\Gamma)=2n$. 
A \emph{holomorphic polynomial
crystallographic action} of $\Gamma$ is a homomorphism 
\[
  \rho:\Gamma \to P_h(\C^n)
\]
such that the induced action of $\Gamma$ on $\C^n$ is free, properly
discontinuous and cocompact. 
\end{definition}

Every holomorphic polynomial crystallographic action is, 
after identifying $\C^n$ with $\R^{2n}$, 
a real polynomial crystallographic action. 

\subsection{A lattice consequence of the main theorem}

We now formulate the consequence of the main theorem in terms of the group
$\Gamma$.

\begin{definition}
Let $\Gamma$ be a torsion-free finitely generated 
nilpotent group with $h(\Gamma)=2n$.
We say that $\Gamma$ is of \emph{nilpotent-complex type} if its Malcev
completion $G_\Gamma$ admits a left-invariant nilpotent complex structure.
\end{definition}

This condition is intrinsic to $\Gamma$, 
since $G_\Gamma$ is determined by $\Gamma$. 

\begin{theorem}
\label{Thm:main_theorem_lattice}
Let $\Gamma$ be a torsion-free finitely generated nilpotent group with
$h(\Gamma)=2n$.  If $\Gamma$ is of nilpotent-complex type, then $\Gamma$
admits a holomorphic polynomial
crystallographic action. 
\end{theorem}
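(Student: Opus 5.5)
We deduce the statement from Theorem \ref{Thm:main_theorem} by transporting left translations through the biholomorphism. By Theorem \ref{thm:malcev}, $\Gamma$ embeds as a lattice in $G_\Gamma$, and $\dim_\R G_\Gamma = h(\Gamma)=2n$. Since $\Gamma$ is of nilpotent-complex type, there is a left-invariant nilpotent complex structure $J$ on $G_\Gamma$. Theorem \ref{Thm:main_theorem} then gives a biholomorphism $\Phi\colon (G_\Gamma,J)\to\C^n$ such that $\Phi$ and $\Phi^{-1}$ are polynomial in the exponential coordinates $\kappa$. Define
\[
\rho(\gamma) \coloneqq \Phi\circ L_\gamma\circ\Phi^{-1}, \qquad \gamma\in\Gamma .
\]
Since $\gamma\mapsto L_\gamma$ is a homomorphism, so is $\rho$.

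\textbf{Each $\rho(\gamma)$ lies in $P_h(\C^n)$.} Holomorphy holds because $L_\gamma$ is a biholomorphism of $(G_\Gamma,J)$, as $J$ is left-invariant. For polynomiality, write $\Phi=\phi\circ\kappa$, where $\phi\colon\R^{2n}\to\C^n$ is a polynomial map with polynomial inverse. Then
\[
\rho(\gamma)=\phi\circ(\kappa\circ L_\gamma\circ\kappa^{-1})\circ\phi^{-1}.
\]
The middle map is $x\mapsto\psi_{\mathcal B}(\BCH(\log\gamma,\psi_{\mathcal B}^{-1}x))$. It is polynomial because $\fg$ is nilpotent, so the BCH series terminates. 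Hence $\rho(\gamma)$ is a real polynomial map $\C^n\to\C^n$ that is holomorphic. A holomorphic map whose real components are polynomial in $(z,\conj z)$ is a polynomial in $z$ alone, since $\bpa$ of it vanishes and we may compare coefficients of the monomials $z^a\conj z^b$. The inverse $\rho(\gamma^{-1})$ is of the same form, so $\rho(\gamma)\in P_h(\C^n)$.

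\textbf{The action is free, properly discontinuous and cocompact.} These properties are invariant under conjugation by the homeomorphism $\Phi$, so it suffices to check them for $\Gamma$ acting on $G_\Gamma$ by left translation. Freeness is clear. Proper discontinuity holds because $\Gamma$ is a discrete subgroup of $G_\Gamma$. Cocompactness holds because $\Gamma$ is a uniform lattice: lattices in simply connected nilpotent Lie groups are cocompact, which is part of the standard Malcev theory.

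\textbf{Expected obstacle.} The only point needing care is the passage from \emph{polynomial in exponential coordinates} to a genuine holomorphic polynomial automorphism of $\C^n$. This is handled by the $\bpa$ coefficient-comparison argument above together with termination of the BCH series. Everything else is formal.
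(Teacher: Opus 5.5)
Your proposal is correct and follows essentially the same route as the paper: conjugate the left translations $L_\gamma$ by the polynomial biholomorphism $\Phi$ from Theorem \ref{Thm:main_theorem}, use termination of the Baker--Campbell--Hausdorff series to get polynomiality, and restrict to $\Gamma$. You also spell out two points the paper leaves implicit, and both are handled correctly: that a holomorphic map which is polynomial in $(z,\conj z)$ is polynomial in $z$ alone, and that freeness, proper discontinuity and cocompactness transfer from the left action of $\Gamma$ on $G_\Gamma$ via the homeomorphism $\Phi$.
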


\begin{proof}
Choose a left-invariant nilpotent complex structure $J$ on $G_\Gamma$.
By Theorem~\ref{Thm:main_theorem}, there exists a biholomorphism
\[
  \Phi:(G_\Gamma,J) \xrightarrow{\ \sim\ } \C^n
\]
which is polynomial with respect to the exponential coordinates on
$G_\Gamma$.

The left translations on $(G_\Gamma,J)$ are holomorphic.  Moreover, since
$G_\Gamma$ is nilpotent, the Baker--Campbell--Hausdorff formula shows that
the left translations are polynomial in exponential coordinates.  Therefore,
for every $g\in G_\Gamma$, we have
\[
  \Phi\circ L_g\circ \Phi^{-1}\in P_h(\C^n).
\]
Restricting this action to the lattice $\Gamma\subset G_\Gamma$, we obtain a
holomorphic polynomial crystallographic action
$\Gamma\to P_h(\C^n)$. 
\end{proof}

Thus the main result of this paper gives a positive answer to the holomorphic
polynomial crystallographic problem for the class of torsion-free finitely
generated nilpotent groups of nilpotent-complex type. 
It is then natural to ask whether there exists a torsion-free finitely
generated nilpotent group $\Gamma$ with $h(\Gamma)=2n$ which does not admit a
holomorphic polynomial crystallographic action.  At present, no such example
seems to be known.

\begin{question}\label{que:main_question}
    Does every torsion-free finitely generated nilpotent group
    $\Gamma$ with $h(\Gamma)=2n$ admit a holomorphic
    polynomial crystallographic action?
\end{question}

Suppose that $\Gamma$ admits a holomorphic polynomial crystallographic action.
Then it is natural to expect that $G_{\Gamma}$ 
admits a left-invariant complex structure $J$, 
and even that $(G_{\Gamma},J) \simeq \C^n$. 
If, however, 
one assumes more strongly that $\Gamma$ admits a holomorphic polynomial
crystallographic action \emph{of bounded degree}, 
then the remarkable uniqueness
theorem due to Benoist and Dekimpe 
imposes the expected restriction on $\Gamma$. 

Here, a (holomorphic) polynomial crystallographic action of 
$\Gamma$ is said to be \emph{of bounded degree} 
if the degrees of the polynomial transformations are bounded uniformly. 
Although the following theorem is stated for polycyclic-by-finite groups, 
which we do not define here, 
we note that every torsion-free finitely generated nilpotent group is 
polycyclic-by-finite.

\begin{theorem}[\cite{BD02}, Theorem 1]
\label{Thm:Benoist-Dekimpe}
    Let $\Gamma$ be a polycyclic-by-finite group, and let
    \[
        \rho_1,\rho_2:\Gamma\to P(\R^m)
    \]
    be two polynomial crystallographic actions of bounded degree.  Then
    $\rho_1$ and $\rho_2$ are polynomially conjugate.  In other words, there
    exists $\Psi \in P(\R^m)$ such that
    \[
        \Psi \circ \rho_1(\gamma)=\rho_2(\gamma)\circ \Psi
    \]
    for all $\gamma\in\Gamma$.
\end{theorem}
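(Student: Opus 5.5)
Although the paper simply cites Theorem~\ref{Thm:Benoist-Dekimpe} from \cite{BD02}, here is the route I would take to prove it. The idea is to replace each $\rho_j(\Gamma)$ by its Zariski closure inside a finite-dimensional algebraic group of polynomial maps. Then I would use the uniqueness of the algebraic hull of $\Gamma$ to build an equivariant polynomial identification between the two actions.

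\emph{Step 1: algebraic closures.} Fix $j\in\{1,2\}$ and let $d$ bound the degrees of the maps $\rho_j(\gamma)$. By the Bass--Connell--Wright degree bound, the inverses $\rho_j(\gamma)^{-1}=\rho_j(\gamma^{-1})$ have degree at most $d^{m-1}$. Hence $\rho_j(\Gamma)$ lies in the finite-dimensional real vector space of polynomial self-maps of $\R^m$ of degree at most $D$. Let $H_j$ be the Zariski closure of $\rho_j(\Gamma)$ in that space. Since it is the closure of a group whose products and inverses stay of bounded degree, $H_j$ is a real linear algebraic group, acting algebraically on $\R^m$.

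\emph{Step 2: the structure of $H_j$.} Write $H_j=U_j\rtimes T_j$, with $U_j$ the unipotent radical and $T_j$ reductive. I would show that $T_j$ fixes a point and that $U_j$ acts simply transitively on $\R^m$. The argument runs as follows. Because $\Gamma$ acts properly discontinuously and cocompactly, every orbit of $H_j$ is Zariski dense and contains a $\Gamma$-orbit. Unipotent orbits of an algebraic action on affine space are closed (Kostant--Rosenlicht). The dimension count $\dim U_j\ge h(\Gamma)=m$ then comes from the fact that $\Gamma$ embeds in $H_j$ with virtually unipotent Zariski closure modulo a torus, as in Mostow's construction of the algebraic hull. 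Freeness of the action of the torsion-free group $\Gamma$ rules out continuous stabilizers in $U_j$. Together these give that $U_j$ acts simply transitively on $\R^m$, and I would make them rigorous by adapting the affine arguments of Fried--Goldman to bounded degree.

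\emph{Step 3: comparing the two actions.} By Mostow's uniqueness of the algebraic hull of a polycyclic-by-finite group, there is an isomorphism of algebraic groups $\theta\colon H_1\to H_2$ with $\theta\circ\rho_1=\rho_2$ on $\Gamma$. This $\theta$ carries $U_1$ onto $U_2$, and it carries $T_1$ onto a conjugate of $T_2$. Choose points $p_1,p_2$ fixed by $T_1$ and $\theta(T_1)$ respectively. The orbit maps $u\mapsto u\cdot p_j$ are polynomial isomorphisms $U_j\cong\R^m$, whose inverses are polynomial because $U_j$ is unipotent. Define
\[
\Psi(u\cdot p_1)=\theta(u)\cdot p_2 .
\]
This $\Psi$ lies in $P(\R^m)$ and is $H_1$-equivariant through $\theta$, so in particular $\Psi\circ\rho_1(\gamma)=\rho_2(\gamma)\circ\Psi$ for all $\gamma\in\Gamma$.

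The main obstacle is Step~2. One has to show that the Zariski closure contains a unipotent subgroup acting \emph{simply transitively}, with the reductive part having a common fixed point. This is where bounded degree and proper cocompactness enter essentially. I would first treat the case where $\Gamma$ is nilpotent and $H_j$ is itself unipotent, since that is the case relevant to this paper. There, simple transitivity follows from closedness of unipotent orbits, freeness, and the equality $\dim U_j=h(\Gamma)$, obtained by comparing the cohomological dimension of $\Gamma\backslash\R^m$ with the Malcev completion of $\Gamma$. The general polycyclic case would then follow by splitting off the semisimple part.
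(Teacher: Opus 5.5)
\paragraph{Overall.}
The paper gives no proof of this statement; it imports it from \cite{BD02} as a black box, so your outline has to stand on its own. The architecture is the natural one:
\begin{itemize}
\item Zariski closure $H_j$;
\item unipotent radical $U_j$ acting simply transitively;
\item $(H_j,\rho_j)$ is the algebraic hull;
\item conjugate via orbit maps.
\end{itemize}
Steps~1 and~3 are essentially formal once Step~2 is available. Three small remarks:
\begin{itemize}
\item Bass--Connell--Wright is unnecessary, since $\rho_j(\gamma)^{-1}=\rho_j(\gamma^{-1})$ already has degree at most $d$.
\item In Step~3 you must also check $Z_{H_j}(U_j)\subset U_j$ before invoking uniqueness of the algebraic hull. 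This follows from simple transitivity of $U_j$, a $T_j$-fixed point, and faithfulness of $H_j$ on $\R^m$.
\item The fixed point of $T_j$ is automatic once $U_j$ is simply transitive. Indeed $\mathrm{Stab}_{H_j}(p)$ is then a Levi complement to $U_j$, and such complements are $U_j$-conjugate.
\end{itemize}
The genuine gap is Step~2. Simple transitivity of $U_j$ is the whole content of the theorem, and the reasons you give for it do not work.

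\paragraph{Where Step~2 fails.}
\begin{itemize}
\item The bound $\dim U_j\ge m$ cannot come from Mostow's theory of hulls. For an arbitrary Zariski-dense polycyclic subgroup, the unipotent radical of its closure only satisfies $\dim U\le h(\Gamma)$, and the inequality can be strict: take $\Z^2\subset\R$ via $1,\sqrt{2}$. So the lower bound must be extracted from proper discontinuity and cocompactness, and you do not say how.
\item Freeness of $\Gamma$ does not exclude a positive-dimensional stabilizer $S\subset U_j$ of a point, since $S$ may meet $\rho_j(\Gamma)$ trivially.
\item Grant that $H_j$-orbits are Zariski dense, which you do not justify. Together with closedness of $U_j$-orbits, this still does not make $U_j$ transitive. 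Nothing in your argument excludes $U_j$-orbits of dimension $<m$ that the reductive part $T_j$ moves transversally. Ruling out exactly this is the hard step; in the affine case, this is where the syndetic-hull argument of Fried--Goldman enters. ``Splitting off the semisimple part'' merely names it.
\end{itemize}

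\paragraph{The nilpotent case.}
Your nilpotent reduction has the same problem. A priori the Zariski closure of $\rho_j(\Gamma)$ may contain a torus even for nilpotent $\Gamma$; that it does not is a consequence of the theorem, not an input.

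If you grant that $H_j$ is unipotent, the argument can be completed, but via properness rather than freeness:
\begin{itemize}
\item Proper discontinuity makes $\rho_j(\Gamma)$ discrete in $H_j$. Being discrete and Zariski dense in a unipotent group, it is a cocompact lattice by Malcev, so $\dim H_j=h(\Gamma)=m$.
\item Then $H_j=\rho_j(\Gamma)C$ with $C$ compact acts properly on $\R^m$. Hence stabilizers are compact subgroups of a unipotent group, so trivial.
\item The orbits are therefore $m$-dimensional and closed, hence open, and a single orbit fills $\R^m$.
\end{itemize}
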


Using this theorem, we obtain the following: 

\begin{theorem}
\label{Thm:bounded-degree-necessary-condition}
Let $\Gamma$ be a torsion-free finitely generated nilpotent group with
$h(\Gamma)=2n$.  If $\Gamma$ admits a holomorphic polynomial
crystallographic action of bounded degree, then $G_{\Gamma}$ admits a
left-invariant complex structure $J$ such that
$(G_{\Gamma},J)\simeq \C^n$.
\end{theorem}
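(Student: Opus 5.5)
We compare the given action with the standard one and use the uniqueness theorem of Benoist and Dekimpe. Let $\rho\colon\Gamma\to P_h(\C^n)$ be a holomorphic polynomial crystallographic action of bounded degree. Identifying $\C^n$ with $\R^{2n}$, we regard $\rho$ as a real polynomial crystallographic action of bounded degree. On the other hand, take exponential coordinates $\kappa\colon G_\Gamma\to\R^{2n}$ and let $\rho_0(\gamma)=\kappa\circ L_\gamma\circ\kappa^{-1}$. By the Baker--Campbell--Hausdorff formula, $\rho_0$ is a real polynomial crystallographic action. Its degrees are bounded by the nilpotency class of $G_\Gamma$, since the BCH series truncates at a fixed length and $\gamma$ enters only through its coordinates. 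By Theorem~\ref{Thm:Benoist-Dekimpe} there is $\Psi\in P(\R^{2n})$ with $\Psi\circ\rho_0(\gamma)=\rho(\gamma)\circ\Psi$ for all $\gamma\in\Gamma$.

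Set $F\coloneqq\Psi\circ\kappa\colon G_\Gamma\to\C^n$. This is a diffeomorphism, since $\Psi$ is a polynomial diffeomorphism, and it satisfies $F\circ L_\gamma=\rho(\gamma)\circ F$. Pull back the standard complex structure of $\C^n$ by $F$ to obtain an integrable complex structure $J$ on $G_\Gamma$. Then $J$ is $\Gamma$-invariant, because each $\rho(\gamma)$ is holomorphic, and $F\colon(G_\Gamma,J)\to\C^n$ is a biholomorphism by construction. The remaining task is to show that $J$ can be taken left-invariant under all of $G_\Gamma$, not merely under $\Gamma$.

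This is the main obstacle. I would exploit algebraicity. In exponential coordinates, $J$ is a matrix-valued function on $\R^{2n}$ obtained from the Jacobian $d\Psi$ and its inverse $d(\Psi^{-1})\circ\Psi$, both polynomial. Hence the entries of $J$ are polynomial. Now consider, for $g\in G_\Gamma$, the matrix function
\[
  g\mapsto (L_g^*J)_p-J_p .
\]
In exponential coordinates on $g$ and $p$ this is polynomial, since $L_g$ is polynomial jointly in $(g,p)$ by BCH. It vanishes for all $g\in\Gamma$ and all $p$. A lattice in a simply connected nilpotent Lie group is Zariski dense in exponential coordinates: $\Gamma$ contains a finite-index subgroup whose logarithm is a full lattice-like set, and a polynomial vanishing on $\exp^{-1}$ of a lattice vanishes identically. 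Concretely, one can use Malcev coordinates of the second kind, in which $\Gamma$ corresponds to $\Z^{2n}$ and the change to exponential coordinates is polynomial with polynomial inverse. A polynomial vanishing on $\Z^{2n}$ is zero.

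It follows that $L_g^*J=J$ for all $g\in G_\Gamma$, so $J$ is left-invariant, and $(G_\Gamma,J)\simeq\C^n$ via $F$. In writing this up I would verify two points carefully: that the pulled-back structure really has polynomial entries in $p$, and that the jointly polynomial dependence in $(g,p)$ holds. Both follow from the BCH formula and the polynomiality of $\Psi^{\pm1}$.
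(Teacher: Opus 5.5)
Your proposal is correct and follows the paper's proof essentially step by step. You compare $\rho$ with the action $\gamma\mapsto\kappa\circ L_\gamma\circ\kappa^{-1}$ via Benoist--Dekimpe, pull back the standard complex structure by $F=\Psi\circ\kappa$, and upgrade $\Gamma$-invariance to $G_\Gamma$-invariance by applying Zariski density of $\Gamma$ to the polynomial expression $(L_g^*J-J)_p$. Your justification of the Zariski density, via Malcev coordinates of the second kind in which $\Gamma$ becomes $\Z^{2n}$, fills in a step the paper only asserts.
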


\begin{proof}
Let $\rho:\Gamma\to P_h(\C^n)$ 
be a holomorphic polynomial crystallographic action of bounded degree.
After identifying $\C^n$ with $\R^{2n}$, we regard $\rho$ as a real
polynomial crystallographic action of $\Gamma$ on $\R^{2n}$.
On the other hand, since $\Gamma$ is a lattice in its Malcev completion
$G_\Gamma$, the left action of $\Gamma$ on $G_\Gamma$ is properly
discontinuous and cocompact. 
Moreover, 
after choosing exponential
coordinates $\kappa \colon G_\Gamma \to \R^{2n}$, 
the Baker--Campbell--Hausdorff formula shows that
the left translations are polynomial maps.  Since $G_\Gamma$ is nilpotent,
their degrees are uniformly bounded.  Thus the canonical left action
\[
    \lambda:\Gamma\to P(\R^{2n}), \quad 
    \gamma \mapsto \kappa \circ L_{\gamma} \circ \kappa^{-1}
\]
is also a polynomial crystallographic action of bounded degree.
Applying Theorem~\ref{Thm:Benoist-Dekimpe} to the two actions $\lambda$ and
$\rho$, we obtain a polynomial diffeomorphism $\Psi \in P(\R^{2n})$ 
such that
\[
    \Psi \circ (\kappa \circ L_{\gamma} \circ \kappa^{-1}) \circ \Psi^{-1}=\rho(\gamma)
\]
for every $\gamma\in\Gamma$. 
Let $J$ be the pull-back of the standard complex structure on $\R^{2n} \simeq \C^n$ 
by $F \coloneqq \Psi \circ \kappa$. 
Then $F$ is a biholomorphism
\[
    F \, \colon \, (G_\Gamma,J) \xrightarrow{\ \sim\ } \C^n.
\]
Moreover,  
$L_{\gamma}$ is a biholomorphism of $(G_{\Gamma},J)$ for every $\gamma \in \Gamma$. 
It remains to see that $J$ is left-invariant under all of $G_\Gamma$, not only
under $\Gamma$. 

In exponential coordinates on $G_\Gamma$, the tensor
\[
    ((L_g)^*J-J)_x \in \Hom(T_x G_{\Gamma}, T_x G_{\Gamma})
\]
has coefficients which are polynomial functions of the point $x\in G_\Gamma$
and of the translating element $g\in G_\Gamma$.  Hence the condition
\[
    (L_g)^*J=J \quad \text{on }G_\Gamma
\]
is equivalent to the vanishing of finitely many polynomial functions of $g$. 

For every $\gamma\in\Gamma$, the map $L_\gamma$ is biholomorphic with respect
to $J$, and hence these polynomial equations are satisfied by all
$\gamma\in\Gamma$. 
Since $\Gamma$ is Zariski dense in $G_\Gamma$, 
the same equations hold for all $g\in G_\Gamma$. 
Therefore every left translation
$L_g$ preserves $J$, and so $J$ is left-invariant.
\end{proof}

Therefore, 
if one wants to find a group $\Gamma$ giving a negative answer to
Question \ref{que:main_question}, 
then a first promising candidate is a group
whose Malcev completion admits no left-invariant complex structure. 
Such examples already exist among $6$-dimensional nilpotent Lie groups.

Theorem \ref{Thm:bounded-degree-necessary-condition} would be particularly useful
if the bounded-degree assumption could be verified in the cases of interest. 
However, this seems to be a difficult problem. 
For instance, 
Dekimpe proved in \cite{Dek02} that polynomial crystallographic actions of
polycyclic-by-finite groups on $\R^2$ are of bounded degree, 
but such a general boundedness result is not known in higher dimensions. 
In particular, 
the theorem does not currently apply to the $6$-dimensional candidates 
mentioned above unless one can independently prove the bounded-degree property.

\section{Examples}
\label{section:Examples}

For a ring $R$, let $H_3(R)$ denote the Heisenberg group
\[
H_3(R)\coloneqq
\left\{
\begin{pmatrix}
1 & x & z \\
0 & 1 & y \\
0 & 0 & 1
\end{pmatrix}
\,\middle|\,
x,y,z\in R
\right\}.
\]

\begin{example}\label{Ex:2step}
    Let $K \coloneqq H_3(\R)\times \R$ 
    be the simply connected nilpotent Lie group whose Lie algebra is
    $\fk(\R) \coloneqq \fh_3 \times \R$. 
    We take a basis $\mathcal B \coloneqq (X_1,Y_1,X_2,Y_2)$
    of $\fk$ such that $[X_1,Y_1]=X_2$
    and all other brackets vanish.  Let
    \[
        \Gamma \coloneqq H_3(\Z)\times \Z
    \]
    be the lattice generated by 
    \[
        \gamma_1 \coloneqq \exp(X_1),\quad
        \gamma_2 \coloneqq \exp(Y_1),\quad
        \gamma_3 \coloneqq \exp(X_2),\quad
        \gamma_4 \coloneqq \exp(Y_2). 
    \]
    We now determine explicitly the holomorphic polynomial crystallographic
    action of $\Gamma$ on $\C^2$ obtained from the biholomorphism constructed
    above.

    Let $J$ be a nilpotent complex structure on $\fk$ such that 
    $Z_i \coloneqq (X_i - \iu Y_i)/2$ for $i=1,2$ define a basis of $\fk^{1,0}$. 
    We identify the Lie group $K$
    with $\C^2$ via global coordinates induced by the exponential map, 
    and the identification 
    $\psi_{\mathcal B} \colon \fk \xrightarrow{\ \sim\ } \R^4 \simeq \C^2$. 
    Now, since
    \[
        \left\{
        \begin{aligned}
        &\conj{Z}_1 + \frac{1}{2}[P,\conj{Z}_1]
        =\conj{Z}_1+\frac{\iu}{4}z_1(Z_2+\conj{Z}_2) \\
        &\conj{Z}_2 + \frac{1}{2}[P,\conj{Z}_2]
        =\conj{Z}_2, 
        \end{aligned}
        \right. 
    \]
    where $P=z_1Z_1+\conj{z}_1\conj{Z}_1+z_2Z_2+\conj{z}_2\conj{Z}_2$, 
    the left-invariant vector fields $\widetilde{\conj{Z}}_i$ on $K$
    are expressed in these coordinates as
    \[
        \left\{
        \begin{aligned}
        &\widetilde{\conj{Z}}_1=
        \pa_{\conj{z}_1}+\frac{\iu}{4}z_1 (\pa_{z_2}+\pa_{\conj{z}_2}) \\
        &\widetilde{\conj{Z}}_2=\pa_{\conj{z}_2}.  
        \end{aligned}
        \right. 
    \]
    Define complex-valued functions on $K$ by
    \[
        \left\{
        \begin{aligned}
        &w_1=z_1 \\
        &w_2=z_2-\frac{\iu}{4}z_1\cz_1 .
        \end{aligned}
        \right.
    \]
    Then $\widetilde{\conj{Z}}_i(w_j)=0$ for all $1 \le i,j \le 2$.
    Hence the map
    \[
        \Phi=(w_1,w_2) \colon (K,J) \to \C^2
    \]
    is holomorphic. Moreover, it is clearly bijective from its explicit form,
    and therefore $\Phi$ is biholomorphic.

    Using the coordinates $w=(w_1,w_2)$ on $K$,
    we compute $\Phi \circ L_{\gamma_i} \circ \Phi^{-1}$ for $i=1,2,3,4$,
    where $L_{\gamma_i}$ denotes the left translation by $\gamma_i$.  Then the
    induced holomorphic polynomial crystallographic action 
    of $\Gamma$ on $\C^2$ is given by 
    \[
        \left\{
        \begin{aligned}
        \gamma_1 \cdot(w_1,w_2)
        &=
        \left(
            w_1+1,\,
            w_2-\frac{\iu}{2}w_1-\frac{\iu}{4}
        \right),\\
        \gamma_2 \cdot(w_1,w_2)
        &=
        \left(
            w_1+\iu,\,
            w_2-\frac{1}{2}w_1-\frac{\iu}{4}
        \right),\\
        \gamma_3 \cdot(w_1,w_2)
        &=
        \left(
            w_1,\,
            w_2+1
        \right),\\
        \gamma_4 \cdot(w_1,w_2)
        &=
        \left(
            w_1,\,
            w_2+\iu
        \right).
        \end{aligned}
        \right.
    \]
\end{example}

\begin{example}\label{Ex:3step}
    Here we consider the Lie algebra $\fh_{16}$ 
    appearing in the classification of
    $6$-dimensional nilpotent Lie algebras 
    admitting complex structures \cite{COUV16}.
    Let $H_{16}$ be the simply connected nilpotent Lie group whose Lie algebra
    is the $6$-dimensional nilpotent Lie algebra $\fh_{16}$ with a basis
    $\mathcal{B} \coloneqq (X_1,Y_1,X_2,Y_2,X_3,Y_3)$
    such that
    \[
        \left\{
        \begin{aligned}
        &[X_1,Y_1]=Y_2, \\
        &[X_1,Y_2]=X_3, \\
        &[Y_1,Y_2]=Y_3,
        \end{aligned}
        \right.
    \]
    and all other brackets vanish. 
    Let $\Gamma$ be the lattice in $H_{16}$
    generated by
    \[
        \gamma_1 \coloneqq \exp(X_1),\;
        \gamma_2 \coloneqq \exp(Y_1),\;
        \gamma_3 \coloneqq \exp(X_2),\;
        \gamma_4 \coloneqq \exp(Y_2),\;
        \gamma_5 \coloneqq \exp(X_3),\;
        \gamma_6 \coloneqq \exp(Y_3).
    \]
    We now determine explicitly the holomorphic polynomial crystallographic
    action of $\Gamma$ on $\C^3$ obtained from the biholomorphism constructed
    above.

    Let $J$ be an almost complex structure on $\fh_{16}$ such that
    $Z_i \coloneqq (X_i - \iu Y_i)/2$ for $i=1,2,3$ define a basis of
    $(\fh_{16})^{1,0}$.  With respect to this basis, the Lie brackets are given by
    \[
        \left\{
        \begin{aligned}
        &[Z_1,\cZ_1]=-\frac{1}{2}Z_2+\frac{1}{2}\cZ_2, \\
        &[Z_1,Z_2]=-\frac{\iu}{2}Z_3, \\
        &[Z_1,\cZ_2]=\frac{\iu}{2}Z_3.
        \end{aligned}
        \right.
    \]
    From this, it follows that $J$ is integrable and nilpotent.
    We identify the Lie group $H_{16}$ with $\C^3$ via global coordinates
    induced by the exponential map, and the identification
    $\psi_{\mathcal B} \colon \fh_{16} \xrightarrow{\ \sim\ } \R^6 \simeq \C^3$. 
    Now, since
    \[
        \left\{
        \begin{aligned}
        &\cZ_1 + \frac{1}{2}[P,\cZ_1] + \frac{1}{12}[P,[P,\cZ_1]]
        =
        \cZ_1-\frac{1}{4}z_1Z_2+\frac{1}{4}z_1\cZ_2
        +\frac{\iu}{24}z_1^2Z_3
        +\left(
            \frac{\iu}{24}z_1\cz_1
            +\frac{\iu}{4}z_2
            -\frac{\iu}{4}\cz_2
        \right)\cZ_3,\\
        &\cZ_2 + \frac{1}{2}[P,\cZ_2] + \frac{1}{12}[P,[P,\cZ_2]]
        =
        \cZ_2+\frac{\iu}{4}z_1Z_3+\frac{\iu}{4}\cz_1\cZ_3,\\
        &\cZ_3 + \frac{1}{2}[P,\cZ_3] + \frac{1}{12}[P,[P,\cZ_3]]
        =
        \cZ_3,
        \end{aligned}
        \right.
    \]
    where 
    $P=
    z_1Z_1+\cz_1\cZ_1
    +z_2Z_2+\cz_2\cZ_2
    +z_3Z_3+\cz_3\cZ_3$, 
    the left-invariant vector fields $\widetilde{\cZ}_i$ on $H_{16}$ are
    expressed in these coordinates as
    \[
        \left\{
        \begin{aligned}
        \widetilde{\cZ}_1
        &=
        \pa_{\cz_1}
        -\frac{1}{4}z_1\pa_{z_2}
        +\frac{1}{4}z_1\pa_{\cz_2}
        +\frac{\iu}{24}z_1^2\pa_{z_3}
        +\left(
            \frac{\iu}{24}z_1\cz_1
            +\frac{\iu}{4}z_2
            -\frac{\iu}{4}\cz_2
        \right)\pa_{\cz_3},\\
        \widetilde{\cZ}_2
        &=
        \pa_{\cz_2}
        +\frac{\iu}{4}z_1\pa_{z_3}
        +\frac{\iu}{4}\cz_1\pa_{\cz_3},\\
        \widetilde{\cZ}_3
        &=
        \pa_{\cz_3}.
        \end{aligned}
        \right.
    \]
    Define complex-valued functions on $H_{16}$ by
    \[
        \left\{
        \begin{aligned}
        &w_1=z_1,\\
        &w_2=z_2+\frac{1}{4}z_1\cz_1,\\
        &w_3=z_3-\frac{\iu}{4}z_1\cz_2+\frac{\iu}{48}z_1^2\cz_1.
        \end{aligned}
        \right.
    \]
    Then $\widetilde{\cZ}_i(w_j)=0$ for all $1 \le i,j \le 3$.
    Hence the map
    \[
        \Phi=(w_1,w_2,w_3) \colon (H_{16},J) \to \C^3
    \]
    is holomorphic.  Moreover, it is clearly bijective from its explicit form,
    and therefore $\Phi$ is biholomorphic.

    Using the coordinates $w=(w_1,w_2,w_3)$ on $H_{16}$, 
    we compute $\Phi\circ L_{\gamma_i}\circ\Phi^{-1}$ for $i=1,\ldots,6$. 
    Then the
    induced holomorphic polynomial crystallographic action of $\Gamma$ on
    $\C^3$ is given by
    \[
        \left\{
        \begin{aligned}
        \gamma_1\cdot(w_1,w_2,w_3)
        &=
        \left(
            w_1+1,\,
            w_2+\frac{1}{2}w_1+\frac{1}{4},\,
            w_3+\frac{\iu}{8}w_1^2+\frac{\iu}{16}w_1
                -\frac{\iu}{4}w_2+\frac{\iu}{48}
        \right),\\
        \gamma_2\cdot(w_1,w_2,w_3)
        &=
        \left(
            w_1+\iu,\,
            w_2-\frac{\iu}{2}w_1+\frac{1}{4},\,
            w_3+\frac{1}{8}w_1^2+\frac{\iu}{16}w_1
                +\frac{1}{4}w_2-\frac{1}{48}
        \right),\\
        \gamma_3\cdot(w_1,w_2,w_3)
        &=
        \left(
            w_1,\,
            w_2+1,\,
            w_3-\frac{\iu}{4}w_1
        \right),\\
        \gamma_4\cdot(w_1,w_2,w_3)
        &=
        \left(
            w_1,\,
            w_2+\iu,\,
            w_3-\frac{3}{4}w_1
        \right),\\
        \gamma_5\cdot(w_1,w_2,w_3)
        &=
        \left(
            w_1,\,
            w_2,\,
            w_3+1
        \right),\\
        \gamma_6\cdot(w_1,w_2,w_3)
        &=
        \left(
            w_1,\,
            w_2,\,
            w_3+\iu
        \right).
        \end{aligned}
        \right.
    \]
\end{example}

\bibliographystyle{amsalpha}
\bibliography{crystal}
\end{document}